\title{Instance-dependent uniform tail bounds for\\ empirical processes}
\author{Sohail Bahmani}
\begin{document}
\maketitle
\begin{abstract}
    We formulate a uniform tail bound for empirical processes indexed by a class of functions, in terms of the individual deviations of the functions rather than the worst-case deviation in the considered class. The tail bound is established by introducing an initial ``deflation'' step to the standard generic chaining argument. The resulting tail bound is the sum of the complexity of the ``deflated function class'' in terms of a generalization of Talagrand's $\gamma$ functional, and the deviation of the function instance, both of which are formulated based on the natural seminorm induced by the corresponding Cram\'{e}r functions. Leveraging another less demanding natural seminorm, we also show similar bounds, though with implicit dependence on the sample size, in the more general case where finite exponential moments cannot be assumed. We also provide approximations of the tail bounds in terms of the more prevalent Orlicz norms or their ``incomplete'' versions under suitable moment conditions.
\end{abstract}


\section{Introduction}\label{sec:introduction}
Let $(X_i)_{i=1}^n$ be i.i.d. copies of a random variable $X$ taking values in some space $\mc X$, and denote by $\E_n$ the expectation with respect to the empirical measure associated with the samples $(X_i)_{i=1}^n$. A central question of the theory of empirical processes is to find tail bounds for the empirical average $\E_n f(X)= n^{-1}\sum_{i=1}^n f(X_i)$ that hold uniformly for all functions $f$ belonging to a given function class $\mc F\subset {\mbb R}^{\mc X}$.

Assuming that the functions in $\mc F$ are all zero-mean, the existing tail bounds in the literature typically assert that with probability at least $1-e^{-r}$, for all $f\in \mc F$ we have
\begin{align*}
	\E_n f & \le (\msf C(\mc F) + \msf S_r(\mc F)) o_n(1)\,,
\end{align*}
where $\msf C(\mc F)$ depends on some measure of ``complexity'' of the function class $\mc F$ (e.g., VC-dimension \citep{VC71,Vap98}, Rademacher complexity \citep{vdVW12,KP00}, or Talagrand's functional \citep{Tal14}), and $\msf S_r(\mc F)$ is some notion of the ``worst-case deviation'' of the functions $f\in \mc F$ at the confidence level $e^{-r}$. Our goal in this paper is to establish ``instance-dependent'' tail bounds in which the worst-case deviation above is replaced by the deviation of each particular function of the function class. It turns out that the instance-dependent tail bounds may provide some improvements in terms of the complexity term as well.

A closely related set of results are tail bounds for \emph{ratio type} empirical processes. \citet{GKW03} and \citet{GK06} have developed such tail bounds for processes indexed by a class of $[0,1]$--bounded functions. In particular, various elaborate non-asymptotic tail bounds are derived in \citep{GK06} by ``slicing'' (or ``peeling'') the function class to sets of functions for which the variance proxy is nearly the same, and applying to each slice Talagrand's concentration inequality for uniformly bounded empirical processes.

Our inspiration is a recent result of \citet{LM23} on instance-dependent tail bounds for certain Gaussian processes. \citet{LM23} used this result as a benchmark to motivate their main goal which is robust mean estimation with optimal direction-dependent sub-Gaussian confidence intervals. Specifically, in the case of a Gaussian processes indexed by points in some centered Euclidean ball,  \cite[Proposition 1]{LM23} derived refined tail bounds that depend on the standard deviation at any queried direction rather than the worst-case standard deviation (i.e., largest eigenvalue of the corresponding covariance matrix). Furthermore, the complexity of the entire class is replaced by a quantity which, depending on the confidence level and the spectrum of the covariance matrix, can be significantly smaller than the square root of the trace of the covariance matrix appearing in the standard bounds.

\ref{sec:problem} provides a more precise statement of the problem of interest. The instance-dependent tail bounds under the assumption of finite exponential moments are presented in \ref{sec:main-exponential}. As a complement to this section, our calculations in \ref{apx-sec:Orlicz} to derive more explicit expressions in the more commonly used case of function classes in (exponential type) Orlicz spaces, can be of independent interest. In \ref{sec:Examples} we consider three illustrative examples. In \ref{ssec:Gaussian-marginals} we discuss the problem studied by \citep{LM23} in more details, and in \ref{ssec:CI-Gaussian-top-m} we use instance-dependent bounds to formulate confidence intervals for the $m$-th largest mean of a general Gaussian vector. \ref{sec:main-only-L1} further generalizes the results of \ref{sec:main-exponential} to situations where the functions of interest are $L_1$ (with respect to the law of $X$), and particularly may not have finite exponential moments. As a corollary, these rather general bounds are made more explicit, especially in terms of the sample size, for functions with finite moments of every order which, again, do not necessarily have finite exponential moments.

\section{Preliminaries and Problem Setup}\label{sec:problem}
Let $\mc F$ denote a finite but arbitrarily large subset\footnote{In many situations infinite function classes can be considered as well, but a completely rigorous analysis for the problems of interest requires the measurability issues to be addressed, e.g., as in \cite[Appx. C]{Pol84}.} of a vector space $\mbb V$ of \emph{centered} functions from $\mc X$ to $\mbb R$ whose \emph{cumulant generating function} is finite in a neighborhood of the origin. Specifically, for every $f\in \mbb V$ we have
\begin{align*}
	\E f(X) & = 0\,,
\end{align*}
and $I_f = \{\lambda \in \mbb R\st \log\E e^{\lambda f(X)}< +\infty\}$, the domain of the corresponding cumulant generating function, contains $0$ in its interior.

For simplicity we assume that the zero function, denoted by $0$, is also in $\mc F$. We also frequently use functions $T_r\st\,\mbb V \to \mbb R_{\ge 0}$ that are defined for $r\ge 0$ as
\begin{align*}
	T_r(g) & \defeq \inf_{\lambda\ge 0} \frac{r+\log \E e^{\lambda g(X)}}{\lambda}\,,
\end{align*}
with the convention that for $r>0$, if $\log \E e^{\lambda g(X)}=+\infty$, then the objective of the infimum is also infinite and the corresponding $\lambda$ is implicitly excluded.
These functions determine certain confidence intervals of interest and in fact are inverses of the \emph{rate function}, a central object in the theory of large deviations \citep{Var84, DZ10}, associated with the random variable $g(X)$.
We emphasize that the domain of $T_r(\cdot)$ is not restricted to $\mc F$, and as will be seen in the sequel we also apply $T_r(\cdot)$ to other functions in $\mbb V$.

It is worth mentioning that $T_r(g)$ is a general substitute for many prevalent measures of ``deviation'' for a function $g\in \mbb V$ at the confidence level $e^{-r}$. For example, if $g(X)$ has a sub-Gaussian distribution and the corresponding sub-Gaussian parameter is proportional to $\norm{g}_{L_2}$, then we have $T_r(g)\lesssim \sqrt{r}\norm{g}_{L_2}$.\footnote{Here and throughout, $P\lesssim Q$ is used as a shorthand for the inequality $P\le c Q$ for some absolute constant $c>0$.} Another common example is of bounded functions $g(\cdot)$, where using Bernstein-type bounds (see \ref{lem:Bernstein} in \ref{apx-sec:proofs}) we can show that $T_r(g) \lesssim \sqrt{r}\norm{g}_{L_2} + r\norm{g}_{L_\infty}$. More generally, as detailed in \ref{apx-sec:Orlicz}, for exponential-type Orlicz spaces, $T_r(g)$ can be bounded by the corresponding Orlicz norm.

The function $T_r(\cdot)$ has certain properties that are important in our derivations. We have collected these properties in the following lemma, which is proved in \ref{apx-sec:proofs} to be self-contained. It is worth mentioning that more general alternatives to $T_r(\cdot)$ with similar properties can be defined easily using certain variational approximations of the corresponding quantile functions \cite[Theorem 2.4]{Pin14}. These variational approximations are important in concentration inequalities for sums of independent random variables (see, e.g., \citep{Rio17} and \citep{Mar21}). We use the mentioned less demanding alternatives of $T_r(\cdot)$ in \ref{sec:main-only-L1} to state a more general, but less explicit, version of our results in \ref{sec:main-exponential}.

\begin{lem}[Properties of $T_r(\cdot)$]\label{lem:T-r(g)}	The function $T_r(\cdot)$ has the following properties:
	\begin{enumerate}[label={(\roman*)}]
		\item \label{itm:positive_homogenous} $T_r(\cdot)$ is positive homogenous in the sense that $T_r(\alpha g) = \alpha T_r(g)$ for any $\alpha > 0$ and all functions $g\in \mbb V$.
		\item \label{itm:zero_is_root} $T_0(g) = 0$ for all functions $g$.
		\item \label{itm:concave_subadditive} The mapping $r\mapsto T_r(g)$, for $r>0$ and any particular function $g \in \mbb V$, is concave and subadditive.
		\item \label{itm:envelope} The even envelope of $T_r(\cdot)$ defined as
		      \begin{align}
			      \bar{T}_r(g) & = \max\{T_r(g),T_r(-g)\}\,,\label{eq:T-r-envelope}		      \end{align}
		      is a seminorm.
	\end{enumerate}
\end{lem}

For any fixed $f\in \mbb V$ the following elementary lemma, which is essentially the well-known Chernoff bound, expresses a tail bound for $\mbb E_n f$ in terms of $T_{r/n}(f)$. The proof is provided in the Appendix for completeness.
\begin{lem}\label{lem:Chernoff}
	With the definitions above, for any function $f\in \mbb V$ (whose moment generating function has $0$ in the interior of its domain), with probability at least $1-e^{-r}$, we have
	\begin{align*}
		\E_n f(X) & \le T_{r/n}\left(f\right)\,.
	\end{align*}
\end{lem}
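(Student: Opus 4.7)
The plan is to apply the standard Chernoff (exponential Markov) argument to the empirical average, exploiting the independence of the samples to factor the moment generating function, and then optimize the free parameter $\lambda$ to match exactly the infimum appearing in the definition of $T_r(f)$.

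Concretely, I would fix $\lambda > 0$ and a threshold $t \in \mbb R$, and write
\begin{align*}
	\P\bigl(\E_n f(X) \ge t\bigr) & = \P\Bigl(e^{\lambda \sum_{i=1}^n f(X_i)} \ge e^{n\lambda t}\Bigr)
	 \le e^{-n\lambda t}\, \E e^{\lambda \sum_{i=1}^n f(X_i)}\,,
\end{align*}
by Markov's inequality. Since the $X_i$ are i.i.d.\ copies of $X$, the expectation on the right factorizes as $\bigl(\E e^{\lambda f(X)}\bigr)^n$, which yields the bound
\begin{align*}
	\P\bigl(\E_n f(X) \ge t\bigr) & \le \exp\bigl(-n\bigl(\lambda t - \log \E e^{\lambda f(X)}\bigr)\bigr)\,.
\end{align*}
Requiring the right-hand side to be at most $e^{-nr}$ is equivalent to $t \ge (r + \log \E e^{\lambda f(X)})/\lambda$. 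Since this holds for every $\lambda > 0$ for which the cumulant generating function is finite, I may take the infimum over such $\lambda$, which is exactly $T_r(f)$ (the value $\lambda = 0$ gives the degenerate bound $+\infty$, so it plays no role). Setting $t = T_r(f)$ then gives $\P(\E_n f(X) > T_r(f)) \le e^{-nr}$, which is the claim.

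There is no real obstacle beyond ensuring that the infimum in the definition of $T_r(f)$ is achieved or approached by $\lambda$'s inside $I_f$; this is automatic since for $\lambda \notin I_f$ the numerator is $+\infty$ by the stated convention, so those $\lambda$ do not affect the infimum. The only minor subtlety is that when $T_r(f) = +\infty$ the statement is vacuous, so one only needs to handle the case where the infimum is finite, in which case the displayed inequality follows by a standard limiting argument along a minimizing sequence of $\lambda$'s.
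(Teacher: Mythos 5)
Your proposal is correct and follows essentially the same route as the paper's own proof: a standard Chernoff/Markov bound using independence to factorize the moment generating function, followed by optimization over $\lambda$ and a limiting argument to handle the fact that the infimum defining $T_r(f)$ may only be approached rather than attained. The only cosmetic caveat is the phrase ``Setting $t = T_r(f)$ then gives\ldots,'' which is slightly premature, but you immediately and correctly flag that one must pass to the limit along a minimizing sequence (or equivalently let $t \downarrow T_r(f)$), which is exactly what the paper does.
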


It is natural to seek an extension of \ref{lem:Chernoff} that provides an upper tail bound for the random variable of the form
\begin{align*}
	Z & =\sup_{f\in \mc F}\ \left(\E_n f(X) - T_{r/n}\left(f\right)\right)\,,
\end{align*}
which translates to a uniform bound for $\E_n f$ that holds for every instance of $f\in \mc F$. It is often more convenient to work with tail bounds expressed in terms of some seminorm of $f$ rather than the $T_r(f)$ which is not subadditive. A natural choice is $\bar{T}_r(\cdot)$ defined by \eqref{eq:T-r-envelope}, and consider the seminormed spaces $(\mbb V, \bar{T}_r(\cdot))$ for $r\ge 0$, where the functions of $\mc F$ belong to. In this paper we focus on finding an upper tail bound for random variables of the form
\begin{align}
	Z & = \sup_{f\in \mc F}\ \left(\E_n f(X) - \bar{T}_{r/n}(f)\right)\,.\label{eq:instance-dependent-formulation}
\end{align}
We emphasize that we use the term ``instance-dependent tail bounds'' specifically to refer to the bounds that generalize the Chernoff bound for an individual function, to the entire class as described above. For example, the result of the standard generic chaining arguments can be expressed in a way that the tail bounds depend on the queried function $f$. However, the resulting bounds are in terms of the optimal choice of the so-called \emph{admissible} subsets of the function class, and the term $\bar{T}_{r/n}(f)$, even with a crude multiplicative factor, is not guaranteed to appear in the bound.

\subsection{Variations}
In this paper we only focus on formulating bounds for \eqref{eq:instance-dependent-formulation} via generic chaining with respect to the distribution of $X$. However, there are variations of the formulation \eqref{eq:instance-dependent-formulation}, as well as, approaches to obtain a tail bound that are worth mentioning.

\paragraph{Data-dependent bounds} Abstract measures of complexity of function classes are generally hard to approximate in application. Therefore, data-dependent tail bounds, such as those based on empirical Rademacher averages, are sometimes favored over the standard tail bounds. Using a secondary randomness incorporated in the empirical process of interest, usually through symmetrization with Rademacher random variables, the data-dependent bounds are obtained by conditioning on the observed samples $(X_i)_{i\in[n]}$ ---here and throughout we use $[n]$ to denote $\{1,2,\dotsc,n\}$. In our problem of interest, it suffices to define the data-dependent analog of $\bar{T}_r(f)$. In particular, for i.i.d Rademacher random variables $(\varepsilon_i)_{i\in[n]}$ which are independent of everything else, the goal is to find an upper tail bound for
\begin{align*}
	\tilde{Z}_{\left(X_i\right)_{i\in [n]}} & \defeq \sup_{f\in \mc F} \frac{1}{n}\sum_{i=1}^n \varepsilon_i f(X_i) - \tilde{T}_{r/n,\left(X_i\right)_{i\in [n]}}(f)\,,
\end{align*}
where
\begin{align*}
	\tilde{T}_{r,\left(X_i\right)_{i\in [n]}}(f) & \defeq \inf_{\lambda\ge 0}\frac{r+ \E_n\left(\log \cosh\left(\lambda f(X)\right)\right)}{\lambda}\,,
\end{align*}
is a random seminorm. Observe that $\tilde{T}_{r,\left(X_i\right)_{i\in [n]}}(f) \le \sqrt{2r} \left(\E_n f^2(X)\right)^{1/2}$ due to the inequality $\log \cosh(z) \le z^2/2$. Therefore, with
\begin{align*}
	\mbb V(X_1,\dotsc,X_n) & \defeq\left\{\left(f(X_1),\dotsc,f(X_n)\right)\st f\in \mbb V\right\}\,,
\end{align*}
and
\begin{align*}
	{\mc F}(X_1,\dotsc,X_n) & \defeq\left\{\left(f(X_1),\dotsc,f(X_n)\right)\st f\in \mc F\right\}\,,
\end{align*}
the stochastic process $H(v) = n^{-1}\sum_{i=1}^n \varepsilon_i v_i$ over $v\in \mbb V(X_1,\dotsc,X_n)$ has sub-Gaussian increments (with respect to the randomness of the $\varepsilon_i$s). Then, it follows from Talagrand's \emph{majorizing measures theorem} \citep{Tal87} that $\sup_{v\in{\mc F}(X_1,\dotsc,X_n)} H(v)$ is essentially comparable to the \emph{Gaussian complexity} of ${\mc F}(X_1,\dotsc,X_n)$ which is a natural geometric quantity. The ``deflated'' version of the function class $\mc F$, that is introduced below in \ref{ssec:deflation}, can also be adapted similarly. 

\paragraph{Regularizing with $\bar{T}^2_{r/n}(f)$} Instead of finding an upper bound for \eqref{eq:instance-dependent-formulation}, we can similarly consider finding an upper bound for
\begin{align}
	Z' & = \sup_{f\in \mc F} \E_n f(X) - \frac{1}{2\lambda}\bar{T}_{r/n}^2(f)\,, \label{eq:instance-dependent-alt}
\end{align}
for a suitably chosen parameter $\lambda > 0$. As detailed below in \ref{ssec:deflation}, to derive a tail bound for $Z$ we introduce a contraction $A[\cdot]$ to deflate the function class $\mc F$ into $\mc A = \{f-A[f]\st f\in \mc F\}$. To obtain a tail bound for $Z'$, we modify the condition on $A[\cdot]$ to be a contraction, to
\begin{align*}
	\bar{T}_{\tilde{r}/n}(A[f]) & \le \bar{T}_{\tilde{r}/n}^2(f)/(2\lambda)\,, & \text{for all}\ f\in \mc F\,,
\end{align*}
for a certain $\tilde{r} = r + O(1)$. For functions $f\in \mc F$ for which $\bar{T}_{\tilde{r}/n}(f) \gg 2\lambda$, the inequality above is less restrictive than the corresponding inequality stated in \ref{thm:main-exponential}. Therefore, we afford to deflate the mentioned set of functions more aggressively. Of course, this gain comes at the cost of less deflation over the remainder of $\mc F$; the parameter $\lambda$ would allow us to trade-off and optimize the overall bound. Interestingly, $Z'$ is also related to the suprema of the normalized empirical process. In particular, we have
\begin{align*}
	Z' & \le \sup_{f\in \mc F}\sup_{\beta \in \mbb R}\ \left(\beta \E_n f(X) - \frac{1}{2\lambda}\bar{T}^2_{r/n}(\beta f)\right) \\
	   & = \frac{\lambda}{2}\sup_{f\in\mc F}\ \left(\frac{\E_n f(X)}{\bar{T}_{r/n}(f)}\right)^2 \,,
\end{align*}
where the second line follows from the fact that $\bar{T}_{r/n}(\cdot)$ is positive homogeneous. Therefore, we can view $2\lambda^{-1}\sup_{f\in \mc F} \E_n\left(\left(f- A[f]\right)(X)\right)$ as a proxy for the suprema of the (squared) normalized empirical process, i.e., $\sup_{f\in\mc F}\left(\E_n f(X)/\bar{T}_{r/n}(f)\right)^2$ which is usually bounded using the peeling method; see, e.g. \cite[Theorem 3.3]{BBM05}, where peeling is used in derivation of generalization bounds in terms of \emph{local Rademacher complexity}. Furthermore, applying symmetrization and using the data-dependent framework as discussed above, the formulation \eqref{eq:instance-dependent-alt} basically converts to the \emph{offset Rademacher complexity} \citep{LRS15} which is developed to derive localized uniform tail bounds without the restrictive boundedness conditions required by the standard localized bounds.

\section{Tail Bounds Assuming Finite Exponential Moments}\label{sec:main-exponential}
We basically follow the \emph{generic chaining} argument \citep{Tal14} with an initial deflation of the function class that enables us to achieve the instance-dependence we aimed for. Furthermore, we use a ``truncated chain'' in our derivations similar to the approach of \citet[Theorem 3.2]{Dir15}, with the distinction that we derive the tail bounds directly without resorting to the polynomial moments as in \citep{Dir15}.

\subsection{A Generalized $\gamma$ functional}
Let us define $\varrho_r(g,h) = \bar{T}_r(g-h)$ as a distance between a pair of functions $g,h\in \mbb V$. With notation overloading, we also denote the distance of a function $g\in \mbb V$ to a set of functions $\mc H\subseteq \mbb V$ by
\begin{align}
	\varrho_r(g,\mc H) & =\inf_{h\in \mc H}\bar{T}_r(g-h)\,.\label{eq:envelope-metric}
\end{align}
Similar to the truncated variant of Talagrand's $\gamma$ functionals introduced in \citep{Dir15}, for $\mc A\subseteq \mbb V$, and $\ubar{\ell}\in \mbb Z_{\ge 0}$ we define
\begin{align}
	\gamma(\mc A;\, r, \ubar{\ell},n) & = \inf_{\left(\mc A_i\right)_{i \ge 0}} \sup_{a\in \mc A}\ \sum_{\ell \ge \ubar{\ell}}\varrho_{(r + (r+1)2^{\ell-\ubar{\ell}})/n}(a, \mc A_\ell)\,,\label{eq:gamma}
\end{align}
where the infimum is taken over an increasing \emph{admissible} sequence $(\mc A_i)_{i\ge 0}$ of the subsets of $\mc A$ with $|\mc A_i|\le 2^{2^i}$ for $i\ge 1$, and  $\mc{A}_0= \{0\}$. For $n=1$, $\ubar{\ell} \approx \log_2(r)$, and the approximation $\bar{T}_r(g) \le r^{1/\alpha}\norm{g}_{\psi_\alpha}$ with $\norm{\cdot}_{\psi_\alpha}$ being a $\psi_\alpha$ Orlicz norm, defined below in \ref{apx-sec:Orlicz}, the $\gamma$ functional defined in \eqref{eq:gamma} effectively reduces to the Talagrand's (truncated) $\gamma_\alpha$ functional.  For a set $\mc A$, the Talagrand's $\gamma_\alpha$ functional with respect to the suitable pseudometric $\rho$ is defined as
\begin{align*}
	\gamma_\alpha(\mc A, \rho;\,\ubar{\ell}) & = \inf_{\left(\mc A_i\right)_{i\ge 0}} \sup_{a\in \mc A}\sum_{\ell\ge \ubar{\ell}}^\infty 2^{\ell/\alpha}\rho(a,\mc A_\ell)\,,
\end{align*}
where the infimum is again taken with respect to a sequence of admissible sets $(\mc A_i)_{i\ge 0}$. The importance of these types of functionals was first revealed by Talagrand's majorizing measures theorem \citep{Tal87}, whose appellation is due to the following essentially equivalent definition of $\gamma_\alpha(\mc A, \rho) = \gamma_\alpha(\mc A, \rho; 0)$:
\begin{align*}
	\gamma_\alpha(\mc A, \rho) & = \inf_{\mu}\sup_{a\in \mc A} \int_0^\infty \left(\log\frac{1}{\mu\left(\{b\in\mc A\st \rho(b,a)\le \varepsilon\}\right)}\right)^{1/\alpha}\d\varepsilon\,,
\end{align*}
with the infimum taken over probability measures $\mu$ on $\mc A$ \citep{Tal01}. The majorizing measures theorem confirms a conjecture due to \citet{Fer75} that the expectation of the supremum of the centered Gaussian process indexed by $\mc A$, is equivalent to $\gamma_2(\mc A,\rho)$ up to constant factors, with $\rho$ being the canonical pseudometric induced by the Gaussian process.

Evaluating or even finding a good approximation for a $\gamma$ functional of a \emph{general} set $\mc A$ can be challenging \citep{Tal01,vHan18a}, and the only solution could be ``guessing'' an appropriate majorizing measure or an admissible sequence of subsets \citep{Tal01}. By pulling the supremum into the summation in the definition of $\gamma_\alpha$ functional, the infimum over the admissible sets would be achieved with each $\mc A_i$ being a covering set of $\mc A$ of cardinality $2^{2^i}$. This approximation describes the \emph{Dudley's (entropy) integral inequality}  (see, e.g., \cite[Theorem 8.1.3]{Ver18}, \cite[equation 2.3]{Dir15}), i.e.,
\begin{align*}
	\gamma_\alpha(\mc A, \rho) \lesssim_\alpha \int_0^\infty \left(\log N(\mc A, \rho,\varepsilon)\right)^{1/\alpha}\d\varepsilon\,,
\end{align*}
where $N(\mc A,\rho, \varepsilon)$ is the covering number of $\mc A$ with respect to $\rho$-balls of radius $\varepsilon$, and $\lesssim_\alpha$ is the usual inequality sign up to a (positive) constant factor depending only on $\alpha$. If accurate estimates of the covering numbers of $\mc A$ are available, approximations of $\gamma_\alpha$ through Dudley's inequality are easy to compute. However, Dudley's inequality may not deliver sufficiently sharp approximations (see, e.g., \cite[Section 3.1]{vHan18a}). The notable approach of \citet{vHan18a} improves on Dudley's inequality by replacing the entropy numbers of the entire set $\mc A$ by those of certain scale-dependent ``thin'' subsets of $\mc A$, imitating the multiscale form of $\gamma_\alpha$. These thin subsets are ``smoothed projections'' of $\mc A$ expressed by minimizers of interpolation of the base metric and a given nonnegative functional at different scales \cite[Section 2.1]{vHan18a}. The resulting approximation of $\gamma_\alpha$ is shown to be sharp in several nontrivial examples where Dudley's inequality yields rather loose approximations \cite[Section 3]{vHan18a}.

It is worth mentioning that the $\gamma$ functional defined by \eqref{eq:gamma} applies in more general settings than the standard $\gamma_\alpha$ functionals thanks to the less restricted form of the dependence of the pseudometric $\varrho_r(\cdot,\cdot)$ on the ``resolution scale'' $r$. If $\mc A$, the function class of interest, is inhomogeneous in the sense that it contains functions with significantly different tail behavior, then the standard $\gamma_\alpha$ functionals might overestimate the size (or complexity) of $\mc A$. As an illustrative example, suppose that for some absolute constant $\eta>0$ we have $\bar{T}_r(f) \approx \norm{f}_{L_\infty}r + \eta\norm{f}_{L_2}\sqrt{r}$ for all $f\in \mbb V$, where approximation is in a multiplicative sense, and $L_\infty$ and $L_2$ norms are defined with respect to the law of $X$. This form of dependence on the resolution scale cannot be reproduced by the $\gamma_\alpha$ functional or other similarly defined quantities where the resolution scale and the distance to an admissible set are decoupled. Measuring the distance with respect to the scale-insensitive norm $\norm{f} = c_\infty\norm{f}_{L_\infty} + c_2\norm{f}_{L_2}$ for arbitrary absolute constants $c_2, c_\infty \ge 0$, leads to a suboptimal upper bound $\bar{T}_r(f) \le (c'_\infty r + c'_2\sqrt{r})(c_\infty\norm{f}_{L_\infty} + c_2\norm{f}_{L_2})$ with $c'_2 , c'_\infty>0$ being constants that may depend only on $\eta$.

\subsection{Generic Chaining with a ``Deflation'' Step}\label{ssec:deflation}
The following theorem is our first main result.

\begin{thm}\label{thm:main-exponential} Let $A\st \mc F \to \mc F$ be a mapping such that
	\begin{align*}
		\bar{T}_{(r+k)/n}(A[f]) & \le \bar{T}_{(r+k)/n}(f)\,,\quad \text{for all}\ f\in \mc F\,,
	\end{align*}
	and
	\begin{align*}
		|A[\mc F]| & \le e^k\,,
	\end{align*}
	for some nonnegative integer $k$, where $A[\mc F] = \left\lbrace A[f]\st f\in \mc F\right\rbrace$ denotes the range of $A[\cdot]$. Furthermore, denote the ``deflation'' of $\mc F$ induced by $A[\cdot]$ by
	\begin{align*}
		\mc A & =\left\lbrace f - A[f]\st f\in \mc F\right\rbrace\,.
	\end{align*}
	Setting $\ubar{\ell}=\lfloor\log_2(r/3)\rfloor$ for $r\ge \log (2)$,
	with probability at least $1-2e^{-r}$, for all $f\in \mc F$ we have
	\begin{align}
		\E_n f(X) - \bar{T}_{(r+k)/n}(f) & \le  2\gamma(\mc A;\, r, \ubar{\ell},n) + \min\left\{2\bar{T}_{(2r+1)/n}(f- A[f]),\, \rad_{(2r+1)/n}(\mc A)\right\}\,, \label{eq:instance-dependent}
	\end{align}
	where $\gamma(\mc A;\, r, \ubar{\ell}, n)$ is defined as in \eqref{eq:gamma}, and $\rad_s(\mc A)=\max_{a\in \mc A} \bar{T}_s(a)$ denotes the radius of $\mc A$ measured by the seminorm $T_s(\cdot)$. The bound can further be optimized with respect to the mapping $A[\cdot]$, which both $k$ and $\mc A$ depend on.
\end{thm}

Let us pause here to make a few remarks about \ref{thm:main-exponential}. First, by taking the supremum with respect to $f\in \mc F$ on the right-hand side of \eqref{eq:instance-dependent} we obtain the following simplified version of the bound
\begin{align}
	\E_n f(X) - \bar{T}_{(r+k)/n}(f) & \le  2\gamma(\mc A;\, r, \ubar{\ell},n) + \rad_{(2r+1)/n}(\mc A)\,. \label{eq:instance-dependent-simplified}\tag{\ref{eq:instance-dependent}\textdegree}
\end{align}
This simplification is innocuous for worst-case choices of $f\in \mc F$ for which the term $2\bar{T}_{(2r+1)/n}(f- A[f])$ can be as large as \emph{twice} $\rad_{(2r+1)/n}(\mc A)$. However, we prefer the more general bound \eqref{eq:instance-dependent} over \eqref{eq:instance-dependent-simplified}, because $2\bar{T}_{(2r+1)/n}(f- A[f])$ can be much smaller than $\rad_{(2r+1)/n}(\mc A)$ for most choices of $f\in \mc F$, which may be useful in some applications. Second, the effectiveness of the deflation step becomes clear by observing that the result of the standard generic chaining argument can be reproduced by the possibly suboptimal choice of $A[f]=0$ for all $f\in \mc F$ in \eqref{eq:instance-dependent}. The admissible sequence in a standard generic chaining argument must cover $\mc F$, whereas in our formulation the admissible sequence must cover $\mc A$, the deflated version of $\mc F$. In particular, a desirable situation occurs when we can choose $A[\cdot]$ with $k\ll r$ such that $\gamma(\mc A;\,r,\ubar{\ell},n)$ is smaller than $\gamma(\mc F;\,r, 0,n)$, and $\bar{T}_{(r+k)/n}(f)$ is close to $\bar{T}_{r/n}(f)$. Third, the assumption that $A[\mc F]$ is finite, is not essential; as can be seen below in the proof, it suffices to guarantee that $\E_n A[f](X) \le \bar{T}_{(r+O(1))/n}(f)$ holds, with probability at least  $1 - e^{-r}$, for all $f\in \mc F$. For example, in \ref{prop:Gaussian-instance-dependent-tail}, this condition is shown to hold through the Gaussian concentration inequality. Finally, the right-hand side of \eqref{eq:instance-dependent} is basically an upper bound for $\sup_{a\in \mc A}\E_n a(X)$ that holds with probability at least $1-e^{-r}$. There are a few techniques to derive such upper bounds other than the generic chaining technique that we considered, such as \emph{Dudley's entropy integral} and the \emph{PAC-Bayesian argument} (see, \citep{AB07} for a shortlist of the different techniques). The generic chaining has the advantage that it applies under rather general conditions, and in the case of Gaussian processes (as in the example of \ref{sec:Examples}) and certain other families of distributions (see \citep{LT15} and references therein), yields sharp bounds.

\begin{proof}[Proof of \ref{thm:main-exponential}]
	As in \eqref{eq:gamma}, let $(\mc A_\ell)_{\ell\ge 0}$ be an increasing admissible sequence of subsets of $\mc A$ such that $\mc A_0=\{0\}$. Let $c>0$ denote a constant that we will specify later in the proof, and set $r_\ell = r + (r+c)2^{\ell- \ubar{\ell}}$. Given $A[\cdot]$ and the sequence $(\mc A_\ell)_{\ell\ge 0}$, we can decompose every $f\in \mc F$ as
	\begin{align*}
		f & = A[f] + f - A[f]                                                                        \\
		  & =  A[f] +  A_{\ubar{\ell}}[f]+\sum_{\ell \ge \ubar{\ell}} (A_{\ell+1}[f] - A_\ell[f])\,,
	\end{align*}
	where $A_\ell[f]$ denotes a function in $\mc A_\ell$ that is closest to $f-A[f]$  with respect to the seminorm $\bar{T}_{r_\ell}(\cdot)$, i.e.,
	\begin{align*}
		A_\ell[f] & = \argmin_{a\in \mc A_\ell} \bar{T}_{r_\ell/n}(f-A[f]-a)\,.
	\end{align*}
	It follows from the above decomposition that
	\begin{equation}
		\begin{aligned}
			\E_n f(X) - \bar{T}_{(r+k)/n}(f) & = \E_n A[f](X) - \bar{T}_{(r+k)/n}(f) +                                                            \\
			                                 & \qquad \E_n A_{\ubar{\ell}}[f](X)+ \sum_{\ell\ge \ubar{\ell}} \E_n (A_{\ell+1}[f]-A_\ell[f])(X)\,.
		\end{aligned}\label{eq:decomposition}
	\end{equation}
	\ref{lem:Chernoff} and a simple union bound guarantee that, with probability at least $1-|A[\mc F]|e^{-r-k}\ge 1-e^{-r}$, we have
	\begin{align}
		\E_n A[f](X) & \le \bar{T}_{(r+k)/n}(A[f]) \,,\quad\text{for all}\ f\in \mc F\,.\label{eq:uniform-A}
	\end{align}
	Similarly, with probability at least $1-2^{2^{\ubar{\ell}}}e^{-r_{\ubar{\ell}}}$, we have
	\begin{align}
		\E_n A_{\ubar{\ell}}[f](X) & \le \bar{T}_{r_{\ubar{\ell}}/n}(A_{\ubar{\ell}}[f])\,,\quad\text{for all}\ f\in \mc F\,.\label{eq:uniform-A-ell}	\end{align}

	Furthermore, for each index $\ell\ge \ubar{\ell}$ there are at most $|\mc A_{\ell+1}|\,|\mc A_\ell|\le 2^{2^{\ell+1}}2^{2^\ell}\le 2^{2^{\ell+2}}$ different functions $A_{\ell+1}[f]-A_\ell[f]$ as $f$ varies in $\mc F$. Applying \ref{lem:Chernoff} and the union bound again it follows that, with probability at least $1-2^{2^{\ell+2}}e^{-r_\ell}$, we also have
	\begin{align}
		\E_n \left(A_{\ell+1}-A_\ell\right)[f](X) & \le \bar{T}_{r_{\ell}/n}(\left(A_{\ell+1}-A_\ell\right)[f])\quad\text{for all}\ f\in \mc F\,.
		\label{eq:uniform-delta-A}
	\end{align}
	Putting \eqref{eq:uniform-A}, \eqref{eq:uniform-A-ell}, and \eqref{eq:uniform-delta-A} back in the decomposition \eqref{eq:decomposition}, with probability at least $1-e^{-r}-2^{2^{\ubar{\ell}}}e^{-r_{\ubar{\ell}}}-\sum_{\ell\ge \ubar{\ell}} 2^{2^{\ell+2}}e^{-r_\ell}$, we have
	\begin{align*}
		 & \E_n f(X) - \bar{T}_{(r+k)/n}(f)                                                                                                                                                           \\
		 & \le \bar{T}_{(r+k)/n}(A[f]) - \bar{T}_{(r+k)/n}(f) + \bar{T}_{r_{\ubar{\ell}/n}}(A_{\ubar{\ell}}[f]) + \sum_{\ell\ge\ubar{\ell}}\bar{T}_{r_\ell/n}\left(A_{\ell+1}[f] - A_{\ell}[f]\right) \\
		 & \le  2\bar{T}_{r_{\ubar{\ell}/n}}(f- A[f]) + \sum_{\ell\ge\ubar{\ell}}\bar{T}_{r_\ell/n}\left(A_{\ell+1}[f] - A_{\ell}[f]\right)                                                           \\
		 & \le 2\bar{T}_{r_{\ubar{\ell}/n}}(f- A[f]) + \sum_{\ell\ge\ubar{\ell}}\bar{T}_{r_{\ell}/n}\left(f - A[f] - A_{\ell}[f]\right)+\bar{T}_{r_{\ell}/n}\left(f - A[f] - A_{\ell+1}[f]\right)     \\
		 & \le 2\bar{T}_{r_{\ubar{\ell}/n}}(f- A[f])+ 2\sum_{\ell\ge\ubar{\ell}}\bar{T}_{r_\ell/n}\left(f - A[f] - A_{\ell}[f]\right)\,,\quad \text{for all}\ f \in \mc F\,,
	\end{align*}
	where the second inequality follows from the assumption $\bar{T}_{(r+k)/n}(A[f]) \le \bar{T}_{(r+k)/n}(f)$ and the fact that
	\begin{align}
		\bar{T}_{r_{\ubar{\ell}/n}}(A_{\ubar{\ell}}[f]) & \le \bar{T}_{r_{\ubar{\ell}/n}}(f - A[f] - A_{\ubar{\ell}}[f]) + \bar{T}_{r_{\ubar{\ell}/n}}(f- A[f]) \nonumber \\
		                                                & \le 2\bar{T}_{r_{\ubar{\ell}/n}}(f- A[f])\,, \label{eq:residual}
	\end{align}
	and the third and fourth inequalities respectively follow from part \labelcref{itm:envelope} of \ref{lem:T-r(g)} and the fact that $\bar{T}_r(f)$ inherits the monotonicity with respect to $r$ from $T_r(f)$. Recalling the definition \eqref{eq:envelope-metric}, on the same event we can write
	\begin{align*}
		\E_n f(X) - \bar{T}_{(r+k)/n}(f) - 2\bar{T}_{r_{\ubar{\ell}/n}}(f- A[f]) & \le 2\sum_{\ell\ge\ubar{\ell}}\varrho_{r_{\ell}/n}\left(f - A[f], \mc A_{\ell}\right)\,,\quad \text{for all}\ f \in \mc F \\
		                                                                         & \le 2\sup_{a\in \mc A}\sum_{\ell\ge\ubar{\ell}}\varrho_{r_\ell/n}\left(a, \mc A_{\ell}\right)\,.
	\end{align*}
	Taking the infimum with respect to the admissible subsets $(\mc A_i)_{i\ge 0}$ on the right-hand side yields
	\begin{align*}
		\E_n f(X) - \bar{T}_{(r+k)/n}(f) - 2\bar{T}_{r_{\ubar{\ell}/n}}(f- A[f]) & \le 2\gamma(\mc A;\,r,\ubar{\ell},n)\,.
	\end{align*}
	Furthermore, if instead of the inequality \eqref{eq:residual} we use
	\begin{align*}
		\sup_{f\in \mc F} \bar{T}_{r_{\ubar{\ell}}/n}(A_{\ubar{\ell}}[f]) & \le \rad_{r_{\ubar{\ell}}/n}(\mc A) = \sup_{a\in \mc A} \bar{T}_{r_{\ubar{\ell}}/n}(a)\,,
	\end{align*}
	the corresponding terms $2\bar{T}_{r_{\ubar{\ell}/n}}(f- A[f])$ in the subsequent inequalities can all be replaced by $\rad_{r_{\ubar{\ell}}/n}(\mc A)$. Then \eqref{eq:instance-dependent} follows as the better of the two resulting bounds.

	To complete the proof, it suffices to show that for $c=\log(2)<1$, and the prescribed $\ubar{\ell}=\left\lfloor \log_2(r/3)\right\rfloor$,  we have $2^{2^{\ubar{\ell}}}e^{-r_{\ubar{\ell}}} + \sum_{\ell\ge\ubar{\ell}}2^{2^{\ell+2}}e^{-r_{\ell}}\le e^{-r}$. The specific choices of $c$ and $\ubar{\ell}$ ensures that for $\ell\ge \ubar{\ell}$ we have
	\begin{align*}
		2^{2^{\ell+2}} e^{-(r_{\ell}-r)} & = \left(2^{2^{\ubar{\ell}+2}}e^{-r-c}\right)^{2^{\ell-\ubar{\ell}}} \\
		                                 & \le 2^{-(2^{\ell - \ubar{\ell}})}\,.
	\end{align*}
	Furthermore, we have
	\begin{align*}
		2^{2^{\ubar{\ell}}}e^{-(r_{\ubar{\ell}}-r)} & \le \frac{1}{16}\,.
	\end{align*}
	The desired inequality for the tail probability then follows as
	\begin{align*}
		2^{2^{\ubar{\ell}}}e^{-r_{\ubar{\ell}}} + \sum_{\ell\ge\ubar{\ell}}2^{2^{\ell+2}}e^{-r_{\ell}} & = \left(2^{2^{\ubar{\ell}}}e^{-(r_{\ubar{\ell}}-r)} + \sum_{\ell\ge\ubar{\ell}}2^{2^{\ell+2}}e^{-(r_{\ell}-r)}\right) e^{-nr} \\
		                                                                                               & \le \left(\frac{1}{16} + \frac{1}{2} +  \sum_{\ell> \ubar{\ell}}2^{-2^{\ell-\ubar{\ell}}}\right) e^{-r}                       \\
		                                                                                               & < \left(\frac{9}{16} + \frac{1/4}{1-1/4}\right) e^{-r}                                                                        \\
		                                                                                               & < e ^{-r}\,.
	\end{align*}
\end{proof}

\section{Examples}\label{sec:Examples}
In this section we consider two examples to further expose the structure and utility of instance-dependent bounds, and show that \ref{thm:main-exponential} provides optimal or nearly-optimal bounds. We show that, up to constant factors, \ref{thm:main-exponential} reproduces the bounds provided below in \ref{prop:Gaussian-instance-dependent-tail,prop:CI-Gaussian-top-m}. Proofs of these propositions as stated are also provided in \ref{apx-sec:proofs}.

\subsection{Marginals of a Gaussian Vector}\label{ssec:Gaussian-marginals}
We first consider the case where the function class consists of \emph{linear} functionals indexed by the centered unit Euclidean ball, i.e.,
\begin{align*}
	\mc{F} = \lbrace x\mapsto\inp{u,x}\st\, u\in\mbb R^d\,, \norm{u}_2\le 1\rbrace\,,
\end{align*}
and the law of the underlying random variable $X\in \mbb R^d$ is $\mr{Normal}(0,\Sigma)$. This scenario is studied in \citep{LM23} who established the following proposition. The original statement in \cite{LM23} uses slightly different formulation and notation. For example, the terms $N$, $\sigma(u)$, and $\log(1/\delta)$ in the original notation respectively correspond to $n$, $(u^\T\Sigma u)^{1/2}$, and $r$ in our formulation. Furthermore, \cite{LM23} considers a scaled version of the deviation term $\sqrt{2r/n}(u^\T\Sigma u)^{1/2}$ and effectively analyzes the upper and lower bounds for $\sup_{u\st \norm{u}_2\le 1} \inp{u,X} - C\sqrt{2r/n}(u^\T\Sigma u)^{1/2}$ for some absolute constant $C>0$. This can be reproduced in our formulation by specializing to $k = cr$ for some constant $c>0$.

\begin{prop}[{\citet[Proposition 1]{LM23}}]
	\label{prop:Gaussian-instance-dependent-tail}
	Let $X\sim \mr{Normal}(0, \Sigma/n)$ be a random vector in $\mbb R^d$, and denote the eigenvalues of the (scaled) covariance matrix $\Sigma$ by $\lambda_1\ge \lambda_2 \ge \dotsb \ge \lambda_d$. Furthermore, let
	\begin{align*}
		S_k & = \sup_{u\in \mbb R^d \st\norm{u}_2 \le 1}\E_n \inp{u,X} - \frac{\sqrt{2r}+\sqrt{k}}{\sqrt{n}}(u^\T\Sigma u)^{1/2}\,.
	\end{align*}
	Then, for any nonnegative integer $k\le d$, with probability at least $1-2e^{-r}$ we have\footnote{We treat the summations whose lower index is larger than their upper index as empty summations that evaluate to zero.}
	\begin{align}
		S_k & \le \sqrt{\frac{\sum_{i=k+1}^d \lambda_i}{n}}+  \sqrt{\frac{2r}{n}\lambda_{k+1}}\,.\label{eq:Gaussian-Prop-upperbound}
	\end{align}
	Furthermore, with $k' = \lceil 6r + 3(\sqrt{2r}+\sqrt{k})^2 \rceil$, with probability at least $1-2e^{-r}$ we have
	\begin{align}
		S_k & \ge \sqrt{\frac{\sum_{i=k'+1}^d \lambda_i}{3n}}\,.\label{eq:Gaussian-Prop-tightness}
	\end{align}
\end{prop}

To understand the significance of \ref{prop:Gaussian-instance-dependent-tail} as well as the role of the integer parameter $k$, it is worth comparing the derived instance-dependent bound to the conventional bounds. A standard approach to bound $\E_n \inp{u,X}$ uniformly for $\norm{u}_2\le 1$ is to apply the \emph{Gaussian concentration inequality} (see, e.g., \cite[Theorem 5.6]{BLM13}) to $\norm{\E_n X}_2$, which, with probability at least $1-e^{-r}$, guarantees that
\begin{equation*}
	\sup_{u\st \norm{u}_2 \le 1}\E_n \inp{u,X} = \norm{\E_n X}_2 \le \sqrt{\frac{\tr(\Sigma)}{n}} + \sqrt{\frac{2r}{n}\,\norm{\Sigma}_\mr{op}}\,,
\end{equation*}
where $\tr(\cdot)$ and $\norm{\cdot}_\mr{op}$, respectively, denote the trace and the operator norm of their matrix arguments. This bound pessimistically considers the worst-case deviation for all of the random variables $\E_n \inp{u,X}$. By setting $k=0$ in \eqref{eq:Gaussian-Prop-upperbound}, we can reproduce this pessimistic bound, except for an extra factor of $2$ in front of the $r$-dependent term. A much better choice for $k$ in the instance-dependent tail bound can be found as follows. For $\ell=0,1,\dotsc,d$, let $\Sigma_\ell$ denote the best rank-$\ell$ approximation of $\Sigma$ with respect to the operator norm, and denote the \emph{effective rank} of $\Sigma - \Sigma_\ell$ by  $d_\ell = \tr(\Sigma - \Sigma_\ell)/\norm{\Sigma-\Sigma_\ell}_\mr{op}$, with the convention that $0/0 = 1$ at $\ell=d$. Furthermore, define
\begin{align*}
	k_\* & =\argmin_{\ell=0,1,\dotsc,d} \max\left\{\frac{2r}{d_\ell},\frac{\ell}{2}\right\}\,,
\end{align*}
and
\begin{align*}
	C_\* & = 	\max\left\{\sqrt{\frac{2r}{d_{k_\*}}},\sqrt{\frac{k_\*}{2r}}\right\}\,.
\end{align*}
Then, setting $k=k_\*$ and straightforward manipulations of the tail bound in \eqref{eq:Gaussian-Prop-upperbound} yields the inequality
\begin{align*}
	\E_n \inp{u,X} &
	\le (C_\* +1) \left(\sqrt{\frac{\tr(\Sigma -\Sigma_{k_\*})}{n}} + \sqrt{\frac{2r}{n}}(u^\T\Sigma u)^{1/2}\right)\,.
\end{align*}
Since $r$ determines the confidence level of the tail bound,  $k_\*$ and $C_\*$ only depend on this confidence level and the spectral characteristics of $\Sigma$. A favorable situation occurs when $C_\*$ is a small constant, which requires both $k_\*$ and $d_{k_\*}$ to be proportional to $r$.

We provide a slightly different a more streamlined proof of \ref{prop:Gaussian-instance-dependent-tail} in the appendix that makes the constant factors reasonably small and explicit. Our proof only invokes the Gaussian concentration inequality, whereas the original proof in \citep{LM23} uses the Gaussian Poincar\'{e} inequality as well.

To put this special case in the general perspective, observe that, with $\mbb V$ being the set of linear functionals over $\mbb R^d$, the function class $\mc{F}$ consists of functions $f(x) = f_u(x)= \inp{u,x}$ with $\norm{u}_2\le 1$, and we have
\[ T_r(f) = \bar{T}_r(f)  = \sqrt{2r/n}\sqrt{u^\T\Sigma u}= \sqrt{2r/n}\norm{f}_{L_2(X)}\,.\]

\subsubsection{Reproducing \eqref{eq:Gaussian-Prop-upperbound} via \ref{thm:main-exponential}}\label{sssec:Gaussian-marginals-Thm1}
Let $\mc B_{k_0}$ denote the centered $k_0$-dimensional unit Euclidean ball in the span of the top $k_0$ eigenvectors of $\Sigma$, i.e., the column space of $\Sigma_{k_0}$. Furthermore, for a suitably small $\epsilon>0$ let $\mc N_{\epsilon/2}$ denote an $\epsilon/2$-net of $\mc B_{k_0}$ with respect to the norm $\norm{u}_\Sigma \defeq (u^\T\Sigma u)^{1/2}$. Then, for $f_u(\cdot) =  \inp{u,\cdot} \in \mc F$ we may choose
\begin{align*}
	A[f_u] & = f_{\hat{u}_{\epsilon}}\,,
\end{align*}
where
\begin{align*}
	\hat{u}_\epsilon & = \begin{cases}
		                     \left(1-\frac{\epsilon}{2\norm{\tilde{u}_{\epsilon/2}}_\Sigma}\right)_+\tilde{u}_{\epsilon/2} & \text{if}\ \norm{\tilde{u}_{\epsilon/2}}_\Sigma > \norm{u}_\Sigma\,, \\
		                     \tilde{u}_{\epsilon/2}                                                                        & \text{otherwise}\,,
	                     \end{cases}
\end{align*}
with
\begin{align*}
	\tilde{u}_{\epsilon/2} & = \argmin_{u'\in \mc N_{\epsilon/2}} \norm{u-u'}_\Sigma\,.
\end{align*}
This construction ensures that $\norm{u - \hat{u}_\epsilon}_\Sigma = \sqrt{\norm{u}_{\Sigma - \Sigma_{k_0}}^2 + \norm{u-\hat{u}_\epsilon}_{\Sigma_{k_0}}^2} \le \sqrt{\norm{u}_{\Sigma - \Sigma_{k_0}}^2 + \epsilon^2}$ and $\norm{\hat{u}_\epsilon}_\Sigma\le \norm{u}_\Sigma$. With the choices made so far, we have
\begin{align*}
	\mc A & = \left\{f_u - f_{\hat{u}_\epsilon} = \inp{u-\hat{u}_\epsilon,\cdot}\st \norm{u}_2 \le 1\right\}\,.
\end{align*}
With $\rho_2(x,y)\defeq \norm{x-y}_2$ denoting the normalized Euclidean metric, we have
\begin{align*}
	\gamma(\mc A ;r,\ubar{\ell},n) & \lesssim n^{-1/2}\gamma_2(\mc V_{k_0} + \mc V^\perp_{k_0,\epsilon},\rho_2;\ubar{\ell})                                   \\
	                               & \le n^{-1/2}\gamma_2(\mc V_{k_0},\rho_2;\ubar{\ell})+ n^{-1/2}\gamma_2(\mc V^\perp_{k_0,\epsilon},\rho_2;\ubar{\ell})\,,
\end{align*}
where
\begin{align*}
	\mc V_{k_0} & = \left\{(\Sigma -\Sigma_{k_0})^{1/2}u\st \norm{u}_2 \le 1\right\}\,,
\end{align*}
and
\begin{align*}
	\mc V^\perp_{k_0,\epsilon} & = \left\{\Sigma_{k_0}^{1/2}(u - \hat{u}_\epsilon)\st \norm{u}_2 \le 1\right\}\,.
\end{align*}
By the majorizing measures theorem \cite[Theorem 2.4.1]{Tal14}, with $Z\sim \mr{Normal}(0,I)$ we have
\begin{align*}
	\gamma_2(\mc V_{k_0},\rho_2;\ubar{\ell}) & \lesssim \E \sup_{v\in \mc V_{k_0}} \inp{v,Z} \\
	                                         & \le \E \norm{(\Sigma-\Sigma_{k_0})^{1/2}Z}_2  \\
	                                         & \le \sqrt{\sum_{i=k_0+1}^d \lambda_i}\,,
\end{align*}
and
\begin{align*}
	\gamma_2(\mc V^\perp_{k_0,\epsilon},\rho_2;\ubar{\ell}) & \lesssim \E \sup_{v\in \mc V^\perp_{k_0,\epsilon}} \inp{v,Z} \\
	                                                        & \le \E \sup_{v\in \epsilon \mc B_{k_0}} \inp{v,Z}            \\
	                                                        & \le \sqrt{k_0}\epsilon\,.
\end{align*}
Furthermore, we have
\begin{align*}
	\rad_{(2r+1)/n}(\mc A) & \le \sqrt{\frac{2r+1}{n}}(\sqrt{\lambda_{k_0+1}} + \epsilon)\,.
\end{align*}
With these bounds at hand, invoking \ref{thm:main-exponential} with $k\ge \log(|\mc N_{\epsilon/2}|)$ guarantees that with probability at least $1-2e^{-r}$ for every $u$ in the unit $\ell_2$ ball we have
\begin{align}
	\inp{u,X} - \sqrt{\frac{2(r+k)}{n}}\norm{u}_\Sigma & \lesssim  \frac{1}{\sqrt{n}}\left(\sqrt{\sum_{i=k_0+1}^d \lambda_i} + \sqrt{k_0}\epsilon \right)+ \sqrt{\frac{r}{n}}(\sqrt{\lambda_{k_0+1}} + \epsilon)\label{eq:bound-via-epsilon-net}
\end{align}
By a na\"ive approximation we have $|\mc N_{\epsilon/2}| \le \left(1 + 4\sqrt{\lambda_1}/\epsilon\right)^{k_0}$. Therefore, we must have $\epsilon \ge 4\sqrt{\lambda_1}/(2^{-1/k_0}e^{k/k_0}-1)$. In particular, if $k\ge k_0\log(1+4\sqrt{\lambda_1/\lambda_{2k_0}})$, then we can choose $\epsilon = \min\{\sqrt{\lambda_{k_0+1}}, \sqrt{\sum_{i>k_0}\lambda_i/k_0}\}$ and \eqref{eq:bound-via-epsilon-net} simplifies to
\begin{align*}
	\inp{u,X} - \sqrt{\frac{2(r+k)}{n}}\norm{u}_\Sigma & \lesssim  \sqrt{\frac{\sum_{i=k_0+1}^d \lambda_i}{n}}+ \sqrt{\frac{r}{n}}\sqrt{\lambda_{k_0+1}}\,,
\end{align*}
which, assuming that $\lambda_1/\lambda_{2k_0}$ is a constant, is effectively \eqref{eq:Gaussian-Prop-upperbound} up to the constant factors.

\subsection{Confidence Intervals for the ``Middle-Ranked'' Means of Correlated Gaussians}\label{ssec:CI-Gaussian-top-m}
In this subsection we derive confidence intervals for the $m$-th largest mean of correlated Gaussian random variables, as another example where instance-dependent tail bounds can be applied. The proof of \ref{prop:CI-Gaussian-top-m} provided in \ref{apx-sec:proofs}, again relies on the Gaussian concentration inequality, as well as a bound on the expected supremum of canonical Gaussian processes over (symmetric) polytopes \cite[Proposition 2.4.16 and Theorem 2.4.18]{Tal14} (see also the discussion in \cite[Section 3.3]{vHan18a}). These tools allow us to express the upper and lower bounds of the confidence interval in more explicit terms. We can basically recover \ref{prop:CI-Gaussian-top-m} through \ref{thm:main-exponential} as explained at the end of this subsection.

Our goal is to find an upper and lower bounds for the $m$-th largest entry of a parameter vector $\theta\in \mbb R^d$ for $m = o(d)$. We are only given $\hat{\theta} = \theta + X$, where $X$ is a zero-mean Gaussian random variable with covariance $\Sigma = \E XX^\T$. We assume that $\Sigma$ is known, and, without loss of generality, it is full-rank. For any vector $v$ we denote by $v^\downarrow$ the vector of the entries of $v$ sorted in decreasing order. Therefore, the $m$-th largest entry of a vector $v$ can be expressed as $v^\downarrow_m$. Furthermore, for any subset $S$ of $[d]$ let $v_{S}\in \mbb R^{|S|}$ denote the restriction of $v$ to the entries indexed by $S$. We also use the shorthand $\Sigma_S = \E X_S X_S^\T$, which is the same as $\Sigma$ restricted to the rows and columns in $S$. By $\binom{[d]}{\ell}$, we denote the set of subsets of $[d]$ of size $\ell$, and we write $\triangle^\ell$ to denote the unit simplex in $\mbb R^\ell$

Perhaps the simplest approach for our problem is to use the inequality \[\left| \theta^\downarrow_m - \hat{\theta}^\downarrow_m\right| \le \norm{\theta-\hat{\theta}}_\infty = \norm{X}_\infty\,,\] that suggests a confidence interval centered at the plug-in estimator $\hat{\theta}^\downarrow_m$ whose width is no less than  $2\norm{X}_\infty$. The Gaussian concentration inequality then guarantees that \[\norm{X}_\infty \le \E\norm{X}_\infty + \sqrt{2r}\max_{i\in [d]} \Sigma^{1/2}_{i,i}\,,\] with probability at least $1-e^{-r}$. Furthermore, we can bound $\E\norm{X}_\infty$, viewed as the expected supremum of a canonical Gaussian process over a (symmetric) polytope, using \cite[Proposition 2.4.16 and Theorem 2.4.18]{Tal14}. Denoting the $i$-th largest diagonal entry of $\Sigma$ by $\Sigma^\downarrow_{i,i}$, for some constant $C>0$ we have
\begin{align}
	\left|\theta^\downarrow_m - \hat{\theta}^\downarrow_m \right| & \le C \max_{i\in [d]} \sqrt{\Sigma^\downarrow_{i,i}\log(i+1)} + \sqrt{2r}\sqrt{\Sigma^\downarrow_{1,1}}\,.\label{eq:theta-variational-form-simple-CI}
\end{align}

Another related problem is the problem of \emph{multiple comparisons} in hypothesis testing \cite{Hsu96}, with the prevalent models assuming independent noise (i.e., a diagonal covariance matrix for Gaussian data). The basic idea is that $(X-\theta)^\T \Sigma ^{-1}(X-\theta)$ follows a $\chi^2(d)$ distribution and we can examine the \emph{least favorable configuration} that match the corresponding quantile of a $\chi^2(d)$ random variable to find the upper and lower threshold of the confidence interval for $\theta_m^\downarrow$. In the case of a diagonal $\Sigma$, this approach reduces to finding $\bar{\alpha}$ (resp. $\ubar{\alpha}$) such that $\min_{S\in \binom{[d]}{m}} \left(X_i - \bar{\alpha} \right)^2/\sigma_i^2$  (resp. $\min_{S\in \binom{[d]}{d-m+1}} \left(X_i - \ubar{\alpha} \right)^2/\sigma_i^2$) matches the $1-e^{-t}$ quantile of $\chi^2(d)$. Therefore, the confidence interval (and its length) do not have a sufficiently simple analytic expression and must be computed numerically.

Using the instance-dependent uniform tail bounds, we establish a confidence interval for $\theta^\downarrow_m$  that is more refined than \eqref{eq:theta-variational-form-simple-CI}. At the end of this subsection we explain how this proposition follows from \ref{thm:main-exponential} by modifying certain steps of the proof provided in the \ref{apx-sec:proofs}.
\begin{prop}\label{prop:CI-Gaussian-top-m}
	Let $\hat{\theta} = \theta + X$ be a noisy observation of a parameter $\theta\in \mbb R^d$ with $X\sim \mr{Normal}(0,\Sigma)$. Furthermore, let  $m=o(d)$ be a positive integer\footnote{The little $o$ notation means that $m/d \to 0$ as $d\to \infty$}, $r\in \mbb R_{\ge 0}$, and $k\le \min\{r,m\}$ be a nonnegative integer. For any nonempty set $S\subseteq [d]$ denote by $\Sigma_{S,k}$ the best rank-$k$ approximation of $\Sigma_S$ with respect to the operator norm, and define the vector $\sigma = \sigma(S,k)$ such that $\sigma_i = \sqrt{(\Sigma_S - \Sigma_{S,k})_{i,i}}$ for $i\in[|S|]$. Then, defining
	\begin{align*}
		\sigma^*(S,k) & \defeq \max_{i\in [|S|]} \sigma^\downarrow_i \sqrt{\log(i+1)}\,,
	\end{align*}
	\begin{align*}
		Q_{S,\beta}(\vartheta) & = \max_{u\in \triangle^{|S|}} \inp{u,\vartheta} - \beta\norm{u}_{\Sigma_S}\,,
	\end{align*}
	which implicitly depends on $\Sigma_S$, and
	\begin{align*}
		\beta_{r,m,k} & =\sqrt{r+m + m\log(d/m) + k}\,,
	\end{align*}
	with probability at least $1-4e^{-r}$, for some some universal constant $C>0$ we have
	\begin{align}
		\theta^\downarrow_m & \ge \min_{S\in \binom{[d]}{d-m+1}} Q_{S,\beta_{r,m,k}}(\hat{\theta}_S) - C \sigma^*(S,k) - \sqrt{2}\beta_{r,m,k}\norm{\sigma(S,k)}_\infty\,, \label{eq:CI-lower}
	\end{align}
	and
	\begin{align}
		\theta^\downarrow_m & \le  \max_{S\in \binom{[d]}{m}} - Q_{S,\beta_{r,m,k}}(-\hat{\theta}_S) + C \sigma^*(S,k) + \sqrt{2}\beta_{r,m,k}\norm{\sigma(S,k)}_\infty\,.\label{eq:CI-upper}
	\end{align}
\end{prop}

If in addition to the assumption $m = o(d)$, we have $m\lesssim r$ (i.e., $m\le cr$ for some fixed constant $c>0$), the bounds above reproduce \eqref{eq:theta-variational-form-simple-CI} up to an extra logarithmic factor for the term $\sqrt{\Sigma^\downarrow_{1,1}}$.

We also have the following minimax lower bounds for estimating $\theta^\downarrow_m$, whose proof is provided in \ref{apx-sec:proofs}.
\begin{prop}\label{prop:lower-bound}
	For $\kappa\ge 0$, let $\Theta=\{\theta\in\mbb R^d\st \norm{\Sigma^{-1/2}\theta}_2 \le \kappa\}$ be a compact domain of parameters. With $\delta_m\ge 0$ defined as
	\begin{align*}
		\delta_m & = \sup_{\theta\in\Theta}\theta^\downarrow_m + \sup_{\eta\in \Theta}\eta^\downarrow_{d-m+1}\,.
	\end{align*}
	For any estimator $g(\hat{\theta})$ of $\theta^\downarrow_m$ we have
	\begin{align}
		\sup_{\theta \in \Theta} \E (g(\hat{\theta})-\theta^\downarrow_m)^2 & \ge \frac{\delta_m^2}{8e\max\{1,2\kappa^2\}} \,.\label{eq:expected-LB}
	\end{align}
	Furthermore, we have
	\begin{align*}
		\sup_{\theta\in \Theta}\P\left(|g(\hat{\theta}) - \theta^\downarrow_m| > \frac{\delta_m}{3\max\{1,\sqrt{2}\kappa\}}\right) & \ge \frac{1}{2e}\,.
	\end{align*}
\end{prop}

Because of the complicated and implicit form of the expressions in \eqref{eq:CI-lower} and \eqref{eq:CI-upper}, it is difficult to compare---in full generality---the width of the confidence interval provided by \ref{prop:CI-Gaussian-top-m} and the minimax lower bound of \ref{prop:lower-bound}. We only focus on the special case where $\Sigma$ is diagonal.	Furthermore, for the sake of simpler calculations we use the lower bound
\begin{align*}
	Q_{S,\beta}(\hat{\theta}_S) & \ge \max_{i\in S} \left(\hat{\theta}_i - \beta \sqrt{\Sigma_{i,i}}\right)\,.
\end{align*}
The width of the confidence interval expressed by \eqref{eq:CI-lower} and \eqref{eq:CI-upper}, which we denote by $\Delta_m$, can be bounded as
\begin{align*}
	\Delta_m = \max_{S\in \binom{[d]}{m}\,, S'\in \binom{[d]}{d-m+1}}                       & \Big(- Q_{S,\beta_{r,m,k}}(-\hat{\theta}_S) - Q_{S',\beta_{r,m,k}}(\hat{\theta}_{S'})   \\
	                                                                                        & + C (\sigma^*(S,k) + \sigma^*(S',k))                                                    \\
	                                                                                        & + \sqrt{2}\beta_{r,m,k}(\norm{\sigma(S,k)}_\infty+\norm{\sigma(S',k)}_\infty)\Big)      \\
	\le \max_{S\in \binom{[d]}{m}\,, S'\in \binom{[d]}{d-m+1}} \Big(\min_{i\in S\,,j\in S'} & \hat{\theta}_i -\hat{\theta}_j + \beta_{r,m,k}(\sqrt{\Sigma_{i,i}}+\sqrt{\Sigma_{j,j}}) \\
	                                                                                        & + C (\sigma^*(S,k) + \sigma^*(S',k))                                                    \\
	                                                                                        & + \sqrt{2}\beta_{r,m,k}(\norm{\sigma(S,k)}_\infty+\norm{\sigma(S',k)}_\infty)\Big)      \\
	\le \max_{S\in \binom{[d]}{m}\,, S'\in \binom{[d]}{d-m+1}} \Big(\min_{i\in S\cap S'}    & 2\beta_{r,m,k}\sqrt{\Sigma_{i,i}}  + C (\sigma^*(S,k) + \sigma^*(S',k))                 \\
	                                                                                        & + \sqrt{2}\beta_{r,m,k}(\norm{\sigma(S,k)}_\infty+\norm{\sigma(S',k)}_\infty)\Big)\,,
\end{align*}
where the second inequality holds because $|S\cap S'| = |S|+|S'|-|S\cup S'| \ge 1$, and we can choose $i=j\in S\cap S'$. Furthermore, we have the inequalities
\begin{align*}
	\max\left\{\norm{\sigma(S,k)}_\infty, \norm{\sigma(S',k)}_\infty\right\} & \le \sqrt{\Sigma^\downarrow_{1,1}}\,,
\end{align*}
and
\begin{align*}
	\max\left\{\sigma^*(S,k),\sigma^*(S',k)\right\} & \le \max_{i\in [d]}\sqrt{\Sigma^\downarrow_{i+k,i+k}\log(i+1)}\,,
\end{align*}
using which we deduce
\begin{align*}
	\Delta_m & \le 2C\max_{i\in [d-k]} \sqrt{\Sigma^\downarrow_{i+k,i+k}\log(i+1)} + (2+2\sqrt{2})\beta_{r,m,k}\sqrt{\Sigma^\downarrow_{1,1}}\,.
\end{align*}

With $\Theta$ defined as in \ref{prop:lower-bound} we have
\begin{align*}
	\sup_{\theta\in \Theta} \theta^\downarrow_m & = \left(\sum_{i=1}^m\frac{1}{\Sigma^\downarrow_{i,i}}\right)^{-1/2}\kappa\,,
\end{align*}
and
\begin{align*}
	\sup_{\eta\in\Theta} \eta^\downarrow_{d-m+1} = \left(\sum_{i=1}^{d-m+1}\frac{1}{\Sigma^\downarrow_{i,i}}\right)^{-1/2}\kappa\,.
\end{align*}
Therefore, \ref{prop:lower-bound} implies that any confidence interval for $\theta^\downarrow_m$ with coverage probability no less than $1-1/(2e)$, should have a width equal to $C_\kappa\left(\sum_{i=1}^m\frac{1}{\Sigma^\downarrow_{i,i}}\right)^{-1/2}$ for some constant $C_\kappa\ge 0$ that may depend on $\kappa$. In particular, for any $\theta \in \Theta$ we have
\begin{align*}
	\P\left(\Delta_m > C_\kappa\left(\sum_{i=1}^m\frac{1}{\Sigma^\downarrow_{i,i}}\right)^{-1/2} \right) & \ge \frac{1}{2e}\,.
\end{align*}
Choosing $r = 1+\log(8)\approx 3$, we have also shown that
\begin{align*}
	\P\left(\Delta_m > 2C\max_{i\in [d-k]} \sqrt{\Sigma^\downarrow_{i+k,i+k}\log(i+1)} + (2+2\sqrt{2})\beta_{r,m,k}\sqrt{\Sigma^\downarrow_{1,1}} \right) & \le \frac{1}{2e}\,.
\end{align*}
Then, if we define
\begin{align*}
	p_m(\Sigma) & \defeq \sum_{i=1}^m \frac{\Sigma^\downarrow_{1,1}}{\Sigma^\downarrow_{i,i}}\,,
\end{align*}
and
\begin{align*}
	q_{m,k}(\Sigma) & \defeq \max_{i\in [d-k]} \frac{\Sigma^\downarrow_{i+k,i+k}}{\Sigma^\downarrow_{1,1}}\log(i+1)  \,,
\end{align*}
then $\Delta_m$ is optimal up to a factor $\mr{polylog}(d)$,  if $m$, $p_m(\Sigma)$, and $q_{m,k}(\Sigma)$ are all bounded from above as $\mr{polylog}(d)$. Specifically, if $m$, $p_m(\Sigma)$, and $q_{m,k}(\Sigma)$ are all absolute constants, then $\Delta_m$ is optimal up to a constant factor.

\subsubsection{Reproducing \eqref{eq:CI-lower} and \eqref{eq:CI-upper} via \ref{thm:main-exponential}}\label{sssec:middle-ranked-Thm1}
Proof of \eqref{eq:CI-lower} provided in \ref{apx-sec:proofs} first expresses $\theta_m^\downarrow$ in a variational form as
\begin{align*}
	\theta_m^\downarrow & = \min_{S\in \binom{[d]}{d-m+1}} \max_{u\in \triangle^{d-m+1}} \inp{u,\hat{\theta}_S} - \inp{u,X_S}\,.
\end{align*}
Then it establishes \eqref{eq:CI-lower} by leveraging a uniform instance-dependent tail bound for $\inp{u,X_S}$ and taking the union bound over $S\in \binom{[d]}{d-m+1}$. We only need to recover \eqref{eq:CI-instance-dependent}, the instance-dependent bound for $\inp{u,X_S}$, using \ref{thm:main-exponential}. Therefore, the core of the argument is basically the same argument we used in \ref{sssec:Gaussian-marginals-Thm1} with some modifications.

Recalling that $\triangle^\ell$ denotes the unit simplex in $\mbb R^\ell$, for any fixed $S\in \binom{[d]}{d-m+1}$ let
\begin{align*}
	\mc F = \left\{f_u  = \inp{u,\cdot}\st u\in \triangle^{d-m+1}\right\}\,.
\end{align*}
Furthermore, for a sufficiently small nonnegative integer $k_0$, let $\triangle^{d-m+1}_{k_0}$ denote the orthogonal projection of $\triangle^{d-m+1}$ onto the range of $\Sigma_{S,k_0}$. Taking $\mc N _{\epsilon/2}$ to be an $\epsilon/2$-net of $\triangle^{d-m+1}_{k_0}$ with respect to the metric induced by $\norm{\cdot}_{\Sigma_S}$ let
\begin{align*}
	\tilde{u}_{\epsilon/2} & = \argmin_{u'\in \mc N_{\epsilon/2}} \norm{u-u'}_{\Sigma_S}\,,
\end{align*}
and
\begin{align*}
	\hat{u}_\epsilon & = \begin{cases}
		                     \left(1-\frac{\epsilon}{2\norm{\tilde{u}_{\epsilon/2}}_{\Sigma_S}}\right)_+\tilde{u}_{\epsilon/2} & \text{if}\ \norm{\tilde{u}_{\epsilon/2}}_{\Sigma_S} > \norm{u}_{\Sigma_S}\,, \\
		                     \tilde{u}_{\epsilon/2}                                                                            & \text{otherwise}\,.
	                     \end{cases}
\end{align*}
Then, we have
\begin{align*}
	\mc A = \left\{f_u - f_{\hat{u}_\epsilon} = \inp{u-\hat{u}_\epsilon,\cdot}\st u\in \triangle^{d-m+1} \right\}\,,
\end{align*}
for which
\begin{align*}
	\gamma(\mc A;r,\ubar{\ell},1) & \lesssim \gamma_2(\mc V_{k_0} + \mc V^\perp_{k_0},\rho_2;\ubar{\ell} )                                      \\
	                              & \le \gamma_2(\mc V_{k_0} ,\rho_2;\ubar{\ell}) + \gamma_2(\mc V^\perp_{k_0,\epsilon} ,\rho_2;\ubar{\ell})\,,
\end{align*}
where again $\rho_2(x,y) = \norm{x-y}_2$, and
\begin{align*}
	\mc V_{k_0} & = \left\{\left(\Sigma_S-\Sigma_{S,k_0}\right)^{1/2}u\st u\in \triangle^{d-m+1}\right\}\,,
\end{align*}
and
\begin{align*}
	\mc V^\perp_{k_0,\epsilon} & = \left\{\Sigma_{S,k_0}^{1/2}(u-\hat{u}_\epsilon)\st u\in \triangle^{d-m+1}\right\}\,.
\end{align*}
We again can invoke the majorizing measures theorem \cite[Theorem 2.4.1]{Tal14} as well as the bound on the entrywise maximum of a Gaussian random vector \cite[Proposition 2.4.16]{Tal14}; with $Z\sim\mr{Normal}(0,I)$ we obtain
\begin{align*}
	\gamma_2(\mc V^\perp_{k_0,\epsilon} ,\rho_2;\ubar{\ell}) & \lesssim \E \sup_{v\in \mc V_{k_0}}\inp{v,Z} \\
	                                                         & \lesssim \sigma^*(S,k_0)\,,
\end{align*}
\begin{align*}
	\gamma_2(\mc V_{k_0} ,\rho_2;\ubar{\ell}) & \lesssim \E \sup_{v\in V^\perp_{k_0,\epsilon}} \inp{v,Z} \\& \le \sqrt{k_0}\epsilon\,,
\end{align*}
and thereby
\begin{align*}
	\gamma(\mc A;r,\ubar{\ell},1) & \lesssim \sigma^*(S,k_0) + \sqrt{k_0}\epsilon\,.
\end{align*}
We also have
\begin{align*}
	\mr{rad}_{2r+1}(\mc A) & \lesssim \sqrt{r}\sup_{u\in \triangle^{d-m+1}}\norm{u-\hat{u}_\epsilon}_{\Sigma_S}       \\
	                       & \le \sqrt{r}\sup_{u\in \triangle^{d-m+1}}(\norm{u}_{\Sigma_S - \Sigma_{S,k_0}}+\epsilon) \\
	                       & = \sqrt{r}\left(\norm{\sigma(S,k_0)}_\infty + \epsilon\right)\,.
\end{align*}
Therefore, if $k\ge \log(|\mc N_{\epsilon/2}|)\ge |A[\mc F]|$, it follows from \ref{thm:main-exponential} that with probability at least $1-2e^{-r}$, for all $u \in \triangle^{d-m+1}$ we have
\begin{align*}
	\inp{u,X_S} - \sqrt{2(r+k)}\norm{u}_{\Sigma_S} & \lesssim  \sigma^*(S,k_0) + \sqrt{r}\norm{\sigma(S,k_0)}_\infty + (\sqrt{r}+\sqrt{k_0})\epsilon\,.
\end{align*}
By the approximation $|\mc N_{\epsilon/2}| \le (1+4\sqrt{\norm{\Sigma_S}_\mr{op}}/\epsilon)^{k_0}$, it suffices to have $k\ge k_0 \log(1+ 4\sqrt{\norm{\Sigma_S}_\mr{op}}/\epsilon)$. In particular, using the fact that $\norm{\Sigma_S}_{\mr{op}} \le \tr(\Sigma_S) \le (d-m+1)\norm{\sigma}$ we can choose $\epsilon = \min\{\sigma^*/\sqrt{k_0}\,,\norm{\sigma}_\infty\}$ and $k\ge k_0 \log\left(1+ 4\sqrt{\norm{\Sigma_S}_\mr{op}}\max\{\sqrt{k_0}/\sigma^*,1/\norm{\sigma}_\infty\}\right)$. Therefore, assuming that
\begin{align*}
	\sqrt{\norm{\Sigma_S}_\mr{op}}\max\{\sqrt{k_0}/\sigma^*,1/\norm{\sigma}_\infty\} & \lesssim d\,,
\end{align*}
we conclude that for $k\gtrsim k_0 \log(d)$, with probability at least $1-2e^{-r}$, for all $u\in \triangle^{d-m+1}$ we have
\begin{align*}
	\inp{u,X_S} - \sqrt{2(r+k)}\norm{u}_{\Sigma_S} & \lesssim  \sigma^*(S,k_0) + \sqrt{r}\norm{\sigma(S,k_0)}_\infty\,.
\end{align*}
By union bound, with probability at least $1-2e^{-r}$, for all $S\in \binom{[d]}{d-m+1}$ and $u\in \triangle^{|S|}$ we have
\begin{align*}
	\inp{u,X_S} - \sqrt{2(r+ m + m\log(d/m)+k)}\norm{u}_{\Sigma_S} & \lesssim  \sigma^*(S,k_0) + \sqrt{r+ m + m\log(d/m)}\norm{\sigma(S,k_0)}_\infty\,.
\end{align*}
Using this inequality in variational expression for $\theta^\downarrow_m$ recovers \eqref{eq:CI-lower} up to the constant factors. The derivations for the upper bound \eqref{eq:CI-upper} can be carried out similarly by modifying the corresponding parts of the proof of \ref{prop:CI-Gaussian-top-m}.

\subsubsection{An abstraction of the example}
The instance-dependent bound was useful in this example thanks to the variational characterization of $\theta^\downarrow_m$. More generally, we can consider estimating $Y(\theta)$ given the noisy observation $\hat{\theta} = \theta + X$, where the function $Y\st \mbb R^d \to \mbb R$ is the minimum over $S\in \mc S$ of convex (lower-semicontinuous) functions $y_S(\cdot)$, i.e.,
\begin{align*}
	Y(x) & = \inf_{S\in \mc S} y_S(x)\,.
\end{align*}
Expressing $y_S(\cdot)$ using its convex conjugate $y_S^*(\cdot)$, we have an equivalent definition
\begin{align*}
	Y(x) & = \inf_{S\in \mc S} \sup_{u} \inp{u,x} - y^*_S(u)\,.
\end{align*}
Therefore,
\begin{align*}
	Y(\theta) & = \inf_{S\in \mc S} \sup_{u} \inp{u,\hat{\theta}} - \inp{u,X} - y^*_S(u)\,,
\end{align*}
which again is a variational formulation where the linear term $\inp{u,X}$ is exposed and can be approximated using instance-dependent tail bounds.

For example, if $\Theta$ is a $d_1\times d_2$ real matrix with $d_2\ge d_1$, the $m$-th largest singular value of $\Theta$ for $m\le d_1$, denoted by $\sigma_m(\Theta)$, can be expressed as
\begin{align*}
	\sigma_m(\Theta) & = \inf_{S\subseteq \mbb R^{d_1}\st \dim(S)=d_1-m+1} \sup_{U \in \mbb R^{d_1\times d_2}\st \norm{U}_*\le 1, \mr{range} (U)= S} \inp{U, \Theta}\,,
\end{align*}
where the infimum is taken over $d_1-m+1$-dimensional subspaces of $\mbb R^{d_1}$, and $\norm{U}_*$ and $\mr{range}(U)$, respectively, denote the nuclear norm and the range (or column space) of the matrix $U$.

\section{Tail Bounds Without the Exponential Moments}\label{sec:main-only-L1}
The results of \ref{sec:main-exponential} rely on the assumption that $\mc F$, the function class of interest, is a subset of (zero-mean) functions whose exponential moment is finite in a neighborhood of the origin. We may relax this assumption significantly by considering $\mbb V$ to be the vector space of zero-mean functions in $L_1(X)$. Then, using a variational approximation of quantile functions \cite[Theorem 2.3]{Pin14} for $\E_n g(X)$, we can define the analog of $T_r(\cdot)$ as
\begin{align}
	T^\sharp_{r,n}(g) & \defeq \inf_{t\in \mbb R} t + e^r \E \left((\E_n g(X) - t)_+\right)\,, \label{eq:variational-quantile-general}
\end{align}
where $(x)_+=\max(x,0)$ denotes the positive part of $x\in \mbb R$. Similarly, we can define
\begin{align*}
	\bar{T}^\sharp_{r,n}(g) & = \max\left\{T^\sharp_{r,n}(g),T^\sharp_{r,n}(-g)\right\}\,,
\end{align*}
which is a seminorm since it inherits convexity and subadditivity from the corresponding quantile approximation \cite[Theorem 2.3]{Pin14}. Equipped with the seminorm $\bar{T}^\sharp_{r,n}(\cdot)$, we can define the analogs of \eqref{eq:envelope-metric} and \eqref{eq:gamma} respectively as
\begin{align*}
	\varrho^\sharp_{r,n}(g,\mc H) & = \inf_{h\in \mc H} \bar{T}^\sharp_{r,n}(g-h)\,,
\end{align*}
for any $\mc H\subseteq \mbb V$, and
\begin{align*}
	\gamma^\sharp(\mc A;\, r, \ubar{\ell},n) & = \inf_{\left(\mc A_i\right)_{i \ge 0}} \sup_{a\in \mc A}\ \sum_{\ell \ge \ubar{\ell}}\varrho^\sharp_{(r + (r+1)2^{\ell-\ubar{\ell}}),n}(a, \mc A_\ell)\,.
\end{align*}
where, as in \eqref{eq:gamma}, $\mc A\subseteq \mbb V$, $\ubar{\ell}_\ge 0$, and the infimum is taken over an increasing admissible sequence $(\mc A_i)_{i\ge 0}$ of the subsets of $\mc A$. The corresponding radius of $\mc A$ is also denoted by
\begin{align*}
	\rad^\sharp_{r,n}(\mc A) & = \max_{a\in \mc A} \bar{T}^\sharp_{r,n}(a)\,.
\end{align*}
Therefore, we can refine \ref{thm:main-exponential} to the following theorem. We omit the proof as it is effectively the same as the proof of \ref{thm:main-exponential} with $\bar{T}_r(f)$ replaced by $\bar{T}_{r,n}^\sharp(f)$ for every $r\ge 0$ and $f\in \mbb V$ that appear in the proof.
\begin{thm}\label{thm:main-general} Let $A\st \mc F \to \mc F$ be a mapping such that
	\begin{align*}
		\bar{T}^\sharp_{r+k,n}(A[f]) & \le \bar{T}^\sharp_{r+k,n}(f)\,,\quad \text{for all}\ f\in \mc F\,,
	\end{align*}
	and
	\begin{align*}
		|A[\mc F]| & \le e^k\,,
	\end{align*}
	for some nonnegative integer $k$, where $A[\mc F] = \left\lbrace A[f]\st f\in \mc F\right\rbrace$ denotes the range of $A[\cdot]$. Furthermore, let
	\begin{align*}
		\mc A & =\left\lbrace f - A[f]\st f\in \mc F\right\rbrace\,.
	\end{align*}
	Setting $\ubar{\ell}=\lfloor\log_2(r/3)\rfloor$ for $r\ge \log (2)$,
	with probability at least $1-2e^{-r}$, for all $f\in \mc F$ we have
	\begin{align*}
		\E_n f(X) - \bar{T}^\sharp_{r+k,n}(f) - \min\left\{2\bar{T}^\sharp_{2r+1,n}(f-A[f]),\, \rad^\sharp_{2r+1,n}(\mc A)\right\} & \le  2\gamma^\sharp(\mc A;\, r, \ubar{\ell},n)\,.
	\end{align*}
	The bound can further be optimized with respect to the mapping $A[\cdot]$, which both $k$ and $\mc A$ depend on.
\end{thm}

While \ref{thm:main-general} applies with minimal requirements thanks to the generality of the definition \eqref{eq:variational-quantile-general}, it does not make the dependence on the sample size (i.e., $n$) transparent. To address this problem, the function class needs to be further restricted, allowing for an approximation of $T^\sharp_{r,n}(g)$ that reveals the role of $n$. Results of this type already established in the literature, e.g., in \citep{LT15} and \citep{Men16}, and in a specialized form in \citep{MP12}, by introducing a more refined ``scale-sensitive'' version of Talagrand's $\gamma$ functional, merely assuming that the functions of interest have finite moments of any order. We can reproduce similar bounds from \ref{thm:main-general} using the following lemma.

\begin{lem}\label{lem:quantile-to-moment}
	Let $g\in \mbb V$ be a zero-mean function with finite $p$-th moment for some $p\ge 2$. Then, for $t>0$ we have
	\begin{align*}
		\E\left(\left(\E_n g(X) - t\right)_+\right) & \le \left(2\sqrt\frac{p}{n}\norm{g}_{\psi_{2,p}}\right)^p\frac{t^{-p+1}}{p-1}\,,
	\end{align*}
	where\footnote{The defined norm is denoted by $\norm{\cdot}_{(p)}$ in \citep{Men16}. Viewing this norm as an ``incomplete'' sub-Gaussian norm, we use the more indicative notation $\norm{\cdot}_{\psi_{2,p}}$ instead.}
	\begin{align}
		\norm{g}_{\psi_{2,p}} & \defeq \sup_{q\in [1,p]}\frac{\norm{g}_{L_q}}{\sqrt{q}}\,.\label{eq:partial-subGaussian-norm}
	\end{align}
\end{lem}
\begin{proof}
	We can apply Markov's inequality and Gin\`{e}-Zinn symmetrization (see, e.g., \cite[Lemma 6.4.2]{Ver18}) to obtain
	\begin{align*}
		\E\left((\E_n g(X) - t)_+\right) & = \int_{t}^\infty \P\left(\E_n g(X) \ge y\right)\d y                                                      \\
		                                 & \le \int_{t}^\infty \frac{\norm{\E_n g(X)}_{L_p}^p}{y^p}\d y                                              \\
		                                 & \le \left(\frac{2}{n}\right)^p \norm{\sum_{i=1}^n \varepsilon_i g(X_i)}_{L_p}^p\: \frac{t^{-p+1}}{p-1}\,.
	\end{align*}
	where $(\varepsilon_i)_{i\ge 1}$ is a sequence i.i.d. Rademacher random variables (independent of the $X_i$s). Furthermore, the moments of $\sum_{i=1}^n \varepsilon_i g(X_i)$, as a sum of i.i.d. symmetric random variables, can be bounded using a result due to \citet[Corollary 2]{Lat97} which yields
	\begin{align*}
		\norm{\sum_{i=1}^n \varepsilon_i g(X_i)}_{L_p} & \le \sup \left\{\frac{p}{q}\left(\frac{n}{p}\right)^{1/q}\norm{g}_{L_q}\st \max(2,p/n)\le q \le p\right\} \\
		                                               & \le \sqrt{pn}\sup_{q\in [1,p]}\frac{\norm{g}_{L_q}}{\sqrt{q}}\,.
	\end{align*}
	Recalling the definition of $\norm{g}_{\psi_{2,p}}$ in \eqref{eq:partial-subGaussian-norm}, the result follows by combining the above inequalities.
\end{proof}

Using \ref{lem:quantile-to-moment} we can bound $T^\sharp_{r,n}(g)$ in terms of $\norm{g}_{\psi_{2,p}}$. In particular, evaluating the argument of the infimum on the right-hand side of \eqref{eq:variational-quantile-general} at $t = 2\sqrt{p/n}e^{r/p}\norm{g}_{\psi_{2,p}}$ reveals that
\begin{align*}
	T^\sharp_{r,n}(g) & \le \frac{2p}{p-1}\sqrt{\frac{p}{n}}e^{r/p}\norm{g}_{\psi_{2,p}}\,.
\end{align*}
If $g(X)$ has a finite moment of order $p=r\ge 2$, then the above inequality reduces to
\begin{align*}
	T^\sharp_{r,n}(g) & \le 4e \norm{g}_{\psi_{2,r}}\sqrt{r/n}\,.
\end{align*}
Therefore, if we further assume that the functions of interest have finite moments of arbitrary order, then
\begin{align*}
	\gamma^\sharp(\mc A; r,\ubar{\ell},n) & \lesssim \inf_{\left(\mc A_i\right)_{i \ge 0}} \sup_{a\in \mc A}\ \sum_{\ell \ge \ubar{\ell}}\sqrt{\frac{r + (r+1)2^{\ell-\ubar{\ell}}}{n}}\norm{a- \mc A_\ell}_{\psi_{2,r + (r+1)2^{\ell-\ubar{\ell}}}}\,,
\end{align*}
where we use the shorthand $\norm{a-\mc A_\ell}_{\psi_{2,r}}$ to denote the distance between $a\in \mc A$ and the set $\mc A_\ell$ with respect to $\norm{\cdot}_{\psi_{2,r}}$. Choosing $\ubar{\ell}$ as prescribed by \ref{thm:main-general}, we have $r + (r+1)2^{\ell-\ubar{\ell}} < 2^{\ell+4}$, thus
\begin{align*}
	\gamma^\sharp(\mc A; r,\ubar{\ell},n) & \lesssim \frac{\gamma^\flat (\mc A; \ubar{\ell}) }{\sqrt{n}}\,,
\end{align*}
where
\begin{align*}
	\gamma^\flat (\mc A; \ubar{\ell}) & \defeq \inf_{\left(\mc A_i\right)_{i \ge 0}} \sup_{a\in \mc A}\ \sum_{\ell \ge \ubar{\ell}} 2^{\ell/2}\norm{a- \mc A_\ell}_{\psi_{2,2^{\ell+4}}}\,,
\end{align*}
and we have the following corollary.

\begin{cor}
	Let $A\st \mc F \to \mc F$ be a mapping such that
	\begin{align*}
		\norm{A[f]}_{\psi_{2,r+k}} & \le \norm{f}_{\psi_{2,r+k}}\,,\quad \text{for all}\ f\in \mc F\,,
	\end{align*}
	and
	\begin{align*}
		|A[\mc F]| & \le e^k\,,
	\end{align*}
	for some nonnegative integer $k$, where $A[\mc F] = \left\lbrace A[f]\st f\in \mc F\right\rbrace$ denotes the range of $A[\cdot]$. Furthermore, let
	\begin{align*}
		\mc A & =\left\lbrace f - A[f]\st f\in \mc F\right\rbrace\,.
	\end{align*}
	Setting $\ubar{\ell}=\lfloor\log_2(r/3)\rfloor$ for $r\ge 2$,
	with probability at least $1-2e^{-r}$, for all $f\in \mc F$ we have
	\begin{align*}
		\E_n f(X) - 4e\sqrt{\frac{r+k}{n}}\norm{f}_{\psi_{2,r+k}} - 4e\sqrt{\frac{2r+1}{n}} \norm{f-A[f]}_{\psi_{2,2r+1}} & \lesssim \frac{\gamma^\flat(\mc A;\ubar{\ell},n)}{\sqrt{n}} \,.
	\end{align*}
\end{cor}

\begin{appendix}
	\section{Remaining Lemmas and Proofs}\label{apx-sec:proofs}

	\subsection*{Proofs of \ref{sec:introduction,sec:problem}}
	\begin{proof}[Proof of \ref{lem:T-r(g)}]
		Part \labelcref{itm:positive_homogenous} of the lemma follows from a straightforward change of variable.

		For part \labelcref{itm:zero_is_root}, we have \begin{align*}
			T_0(g) & = \inf_{\lambda \ge 0}\frac{\log \E e^{\lambda g(X)}}{\lambda}                                 \\
			       & \le \lim _{\lambda \downarrow 0} \frac{\log \E e^{\lambda g(X)}}{\lambda}                      \\
			       & = \lim _{\lambda \downarrow 0} \frac{\E\left(g(X)e^{\lambda g(X)}\right)}{\E e^{\lambda g(X)}} \\
			       & = 0\,,
		\end{align*}
		where the third and fourth line respectively follow from the l'H\^{o}pital's rule and the assumption that $g(X)$ is zero-mean. However, by Jensen's inequality we have $\E e^{\lambda g(X)}\ge e^{\lambda \E g(X)}=1$, which means that $T_0(g) \ge 0$. Therefore, we must have
		\begin{align*}
			T_0(g) & = 0\,.
		\end{align*}

		For part \labelcref{itm:concave_subadditive} observe that  $T_r(g)$ can be equivalently expressed as
		\begin{align*}
			T_r(g) & = \inf_{\theta \ge 0} \left(\theta r + \theta \log \E e^{g(X)/\theta}\right)       \\
			       & = - \sup_{\theta \ge 0} \left(-\theta r - \theta \log \E e^{g(X)/\theta}\right)\,.
		\end{align*}
		Since the supremum is the convex conjugate of $\theta \mapsto \theta \log \E e^{g(X)/\theta}$ evaluated at $-r$, we conclude that $r\mapsto T_r(g)$ is concave. The proved concavity together with part \labelcref{itm:zero_is_root} of the lemma, guarantee that for all $r,s\ge 0$ we have
		\begin{align*}
			\frac{r}{r+s}T_{r+s}(g) & = \frac{r}{r+s}T_{r+s}(g)+\frac{s}{r+s}T_0(g) \\
			                        & \le  T_{r}(g)\,,
		\end{align*}
		and
		\begin{align*}
			\frac{s}{r+s}T_{r+s}(g) & = \frac{s}{r+s}T_{r+s}(g)+\frac{r}{r+s}T_0(g) \\
			                        & \le  T_{s}(g)\,,
		\end{align*}
		which add up to
		\begin{align*}
			T_{r+s}(g) & \le T_{r}(g) + T_{s}(g)\,,
		\end{align*}
		proving the subadditivity of $r\mapsto T_r(g)$.

		To prove part \labelcref{itm:envelope} we readily have $\bar{T}_r(0) = 0$, and
		\begin{align*}
			\bar{T}_r(\alpha g) & = \max\{T_r(\alpha g),T_r(-\alpha g)\} \\
			                    & = |\alpha| \bar{T}_r(g)\,,
		\end{align*}
		for every $f\in \mbb V$ and nonzero real number $\alpha$. Therefore, it suffices to show that $\bar{T}_r(g)$ is convex in $f\in \mbb V$. We show that $T_r(\cdot)$ is convex, which implies the convexity of $\bar{T}_r(\cdot)$. Let us define $\kappa(g) = \log \E e^{g(X)}$ for $f\in \mbb V$, and denote by $\mbb V^*$ the dual space of $\mbb V$, i.e., the space of linear functionals on $\mbb V$ that are bounded in the sup norm. It follows from the H\"{o}lder's inequality that $\kappa(\cdot)$ is convex. We also define the convex conjugate of $\kappa(\cdot)$ as \[\kappa^*(w) = \sup_{f\in \mbb V}\inp{w,f} - \kappa(g)\,,\]
		for every $w\in \mbb V^*$. It can be shown that $\kappa(\cdot)$ is also lower semi-continuous which guarantees $\kappa(g)=\sup_{w\in \mbb V^*}\inp{w,g}-\kappa^*(w)$ for all $g\in \mbb V$. Our goal is to show that
		\begin{align}
			T_r(g) & = \sup_{w\in \mbb V^*\st \kappa^*(w)\le r}\inp{w,g}\,, \label{eq:T-r conjugate}		\end{align}
		which clearly proves the convexity of $T_r(g)$. For $r=0$ the identity \eqref{eq:T-r conjugate} holds trivially as $T_0(g)=0$ for all $g\in \mbb V$. Then, without loss of generality we may assume that $r>0$ and write the right-hand side of \eqref{eq:T-r conjugate} as
		\begin{align*}
			\sup_{w\in \mbb V^*\st \kappa^*(w)\le r}\inp{w,g} & = \sup_{w\in \mbb V^*} \inf_{\gamma \ge 0} \inp{w,g} - \gamma(\kappa^*(w) - r)\,.
		\end{align*}
		Straightforward calculations show that $\kappa^*(0) = 0 < r$. Therefore, the \emph{Slater's condition} is satisfied, and by invoking strong duality we can write
		\begin{align*}
			\sup_{w\in \mbb V^*\st \kappa^*(w)\le r}\inp{w,g} & = \sup_{w\in \mbb V^*} \inf_{\gamma \ge 0} \inp{w,g} - \gamma(\kappa^*(w) - r)  \\
			                                                  & = \inf_{\gamma \ge 0} \sup_{w\in \mbb V^*}  \inp{w,g} - \gamma(\kappa^*(w) - r) \\
			                                                  & = \inf_{\gamma\ge 0} \gamma \kappa(\gamma^{-1} g) + \gamma r                    \\
			                                                  & = T_r(g)\,,
		\end{align*}
		where the last equation follows by the change of variable $\lambda = 1/\gamma$.
	\end{proof}

	\begin{proof}[Proof of \ref{lem:Chernoff}]
		By the standard Chernoff bound, for any $T>T_{r/n}(f)$ we have
		\begin{align*}
			\P\left(\E_n f(X) > T\right) & \le \inf_{\lambda\ge 0}e^{n\log\E e^{\lambda f(X)}-n\lambda T}\,.
		\end{align*}
		It follows from the definition of $T_{r/n}(\cdot)$ that there exists $\lambda' \ge 0$ such that
		\[
			T_{r/n}(f) \le \frac{r/n + \log\E e^{\lambda' f(X)}}{\lambda'} < T\,.
		\]
		Therefore, we deduce
		\begin{align*}
			\P\left(\E_n f(X) > T\right) & \le e^{\log\E e^{\lambda' f(X)}-\lambda' T} \\
			                             & \le e^{-r}\,.
		\end{align*}
		and consequently
		\begin{align*}
			\P\left(\E_n f(X)\le T_r(f)\right) & = \lim_{T\downarrow T_{r/n}(f)} \P\left(\E_n f(X)\le T\right) \\
			                                   & \ge 1-e^{-r}\,.
		\end{align*}
	\end{proof}

	\subsection*{Proofs of \ref{sec:Examples}}
	\begin{proof}[Proof of \ref{prop:Gaussian-instance-dependent-tail}]
		Let $G\sim \mr{Normal(0,I)}$ be a standard normal random vector. Clearly, $\E_n \inp{u,X}= \inp{\frac{1}{\sqrt{n}}\Sigma^{1/2}u,G}$
		in distribution, with $\Sigma^{1/2}$ denoting the symmetric square root of the covariance matrix $\Sigma$. Let $\Sigma_k$ denote the best rank-$k$ approximation of $\Sigma$ with respect to the operator norm, and let $\pi_k G$ denote the orthogonal projection of $G$ onto the range of $\Sigma_k$. We have
		\begin{align*}
			 & \inp{\frac{1}{\sqrt{n}}\Sigma^{1/2} u,G} - \frac{\sqrt{2r}+\sqrt{k}}{\sqrt{n}}(u^\T\Sigma u)^{1/2}                                                                    \\
			 & \le    \inp{\frac{1}{\sqrt{n}}\Sigma^{1/2} u,G- \pi_k G} +\\
			 & \qquad \inp{\frac{1}{\sqrt{n}}\Sigma^{1/2} u,\pi_k G} - \frac{\sqrt{2r}+\sqrt{k}}{\sqrt{n}}(u^\T\Sigma_k u)^{1/2} \\
			 & \le  \frac{1}{\sqrt{n}}\norm{\Sigma^{1/2}(G-\pi_k G)}_2 + \\
			 & \qquad \sqrt{\frac{u^\T \Sigma_k u}{n}}\left(\norm{\pi_k G}_2-\sqrt{2r}-\sqrt{k}\right)\,,
		\end{align*}
		where the second line follows from the fact that $u^\T\Sigma_k u\le u^\T\Sigma u$, and the third line follows from the Cauchy--Schwarz inequality applied to each of the inner products.
		Using the Gaussian concentration inequality, with probability at least $1-e^{-r}$ we have
		\begin{align*}
			\norm{\Sigma^{1/2}(G-\pi_k G)}_2 & \le    \sqrt{\tr(\Sigma -\Sigma_k)}+  \sqrt{\norm{\Sigma - \Sigma_k}_{\mr{op}}}\,\sqrt{2r}\,,
		\end{align*}
		and similarly, with probability at least $1-e^{-r}$,
		\begin{align*}
			\norm{\pi_k G}_2 & \le \sqrt{k} + \sqrt{2r}\,.
		\end{align*}
		The upper bound for $S_k$ in \eqref{eq:Gaussian-Prop-upperbound} follows by combining the three derived inequalities and using the identities $\tr(\Sigma - \Sigma_k) = \sum_{i=k+1}^d \lambda_i$ and $\norm{\Sigma - \Sigma_k}_{\mr{op}} = \lambda_{k+1}$.

		To prove the lower bound for $S_k$, first observe that if $6r+3(\sqrt{2r}+\sqrt{k})^2 > d$ then \eqref{eq:Gaussian-Prop-tightness} holds trivially as its right-hand side vanishes to zero. Therefore, without loss of generality we may assume that $6r+3(\sqrt{2r}+\sqrt{k})^2 \le d$. We can express $S_k$ by its dual representation as
		\begin{align*}
			S_k & =\sup_{u}\inf_{x}\inp{u,X-x}-\frac{\sqrt{2r}+\sqrt{k}}{\sqrt{n}}\left(u^\T\Sigma u\right)^{1/2}+\norm{x}_{2} \\
			    & =\inf_{x}\sup_{u}\inp{u,X-x}-\frac{\sqrt{2r}+\sqrt{k}}{\sqrt{n}}\left(u^\T\Sigma u\right)^{1/2}+\norm{x}_{2} \\
			    & =\inf_{x^\T\Sigma^{-1}x\le (\sqrt{2r}+\sqrt{k})^2/n}\norm{X-x}_{2}       \,,
		\end{align*}
		where we used the strong duality on the second line, which holds by the Slater's condition. Using the strong duality again to simplify $S_k^2$, we have
		\begin{align*}
			S_k^{2} & =\inf_{x}\sup_{\beta\ge0}\norm{X-x}_{2}^{2}+\beta\left(x^\T\Sigma^{-1}x-\frac{(\sqrt{2r}+\sqrt{k})^2}{n}\right)                                                                                        \\
			        & =\sup_{\beta\ge0}\inf_{x}\norm{X-x}_{2}^{2}+\beta\left(x^\T\Sigma^{-1}x-\frac{(\sqrt{2r}+\sqrt{k})^2}{n}\right)                                                                                        \\
			        & =\sup_{\beta\ge0}\norm{\left(I-\left(I+\beta\Sigma^{-1}\right)^{-1}\right)X}_{2}^{2}+\\
					& \qquad \beta\left(\norm{\left(I+\beta\Sigma^{-1}\right)^{-1}X}_{\Sigma^{-1}}^{2}-\frac{(\sqrt{2r}+\sqrt{k})^2}{n}\right) \\
			        & =\sup_{\beta\ge0}X^{\T}\left(I-\left(I+\beta\Sigma^{-1}\right)^{-1}\right)X-\frac{\beta (\sqrt{2r}+\sqrt{k})^2}{n}                                                                                     \\
			        & \overset{\mr{dist.}}{=}\sup_{\beta\ge0}\sum_{i=1}^{d}\frac{\beta\lambda_{i}}{n(\lambda_{i}+\beta)}g_{i}^{2}-\frac{\beta (\sqrt{2r}+\sqrt{k})^2}{n}\,,
		\end{align*}
		where $g_i$'s are i.i.d. standard Gaussian random variables. With $a_i = \beta \lambda_i/(n(\beta + \lambda_i))$ for $i\in [d]$, for any fixed $\beta \ge 0$, using the Chernoff bound and the formula for the moment-generating function of $g_i^2$, with probability at least $1-e^{-r}$, we have
		\begin{align*}
			\sum_{i=1}^d a_ig_i^2 & \ge \sup_{c\ge 0} \frac{-r + \sum_{i=1}^d \log(1+2ca_i)/2}{c}\,.
		\end{align*}
		Therefore, we can guarantee with the same probability that
		\begin{align*}
			S_k^2 & \ge \sup_{\beta\ge 0} \sup_{c\ge 0} \frac{-r + \sum_{i=1}^d \log\left(1+\frac{2c\beta \lambda_i}{n(\beta+\lambda_i)}\right)/2}{c} -\frac{\beta (\sqrt{2r}+\sqrt{k})^2}{n}\,.
		\end{align*}
		Recall that $k' = \lceil 6r+3(\sqrt{2r}+\sqrt{k})^2\rceil$. Choosing $\beta = \lambda_{k'}$, and $c= n/(2\beta)$ we have
		\begin{align*}
			S_k^2 & \ge \frac{-r +\sum_{i=1}^d \log(1+\frac{\lambda_i}{\lambda_{k'} + \lambda_i})/2}{n/(2\lambda_{k'})} - \frac{(\sqrt{2r}+\sqrt{k})^2}{n}\lambda_{k'} \\
			      & \ge \frac{\lambda_{k'}\sum_{i=1}^d \log(1+\frac{\lambda_i}{\lambda_{k'} + \lambda_i})}{n} - \frac{2r+(\sqrt{2r}+\sqrt{k})^2}{n}\lambda_{k'}        \\
			      & \ge \frac{\sum_{i=1}^d \lambda_{k'}\lambda_i/(\lambda_{k'} + 2\lambda_i)}{n} - \frac{2r+(\sqrt{2r}+\sqrt{k})^2}{n}\lambda_{k'}\,,
		\end{align*}
		where we used the inequality $\log(1+x) \ge x/(x+1)$ for $x\ge 0$ on the last line. Splitting the sum into a sum over $i\le k'$, and a sum over $i>k'$, we have
		\begin{align*}
			\sum_{i\le k'} \lambda_{k'}\lambda_i/(\lambda_{k'} + 2\lambda_i) & \ge \frac{k'}{3}\lambda_{k'}\,,
		\end{align*}
		and
		\begin{align*}
			\sum_{i> k'} \lambda_{k'}\lambda_i/(\lambda_{k'} + 2\lambda_i) & \ge \frac{1}{3}\sum_{i>k'}\lambda_i\,.
		\end{align*}
		Therefore, we have
		\begin{align*}
			S^2_k & \ge \frac{1}{3n}\sum_{i=k'+1}^d \lambda_i\,.
		\end{align*}
	\end{proof}

	\begin{proof}[Proof of \ref{prop:CI-Gaussian-top-m}]
		We express $\theta^\downarrow_m$ in an equivalent min-max variational form as
		\begin{align}
			\theta^\downarrow_m & = \min_{S\in \binom{[d]}{d-m+1}} \max_{u\in \triangle^{d-m+1}} \inp{u, \theta_S}\nonumber                                               \\
			                    & = \min_{S\in \binom{[d]}{d-m+1}} \max_{u\in \triangle^{d-m+1}} \inp{u,\hat{\theta}_S} - \inp{u,X_S}\,.\label{eq:theta-variational-form}
		\end{align}
		For the prescribed nonnegative integer $k\le r$, let $\pi_S$ and $\pi_S^\perp$, respectively, denote the orthogonal projections onto the range and the nullspace of $\Sigma_{S,k}$. Then, with $G\sim \mr{Normal}(0,I)$ we can write
		\begin{align*}
			\inp{u, \pi_S X_S} & \overset{\mr{dist.}}{=} \inp{\Sigma_S^{1/2}\pi_S u, \pi_S G} \\
			                   & \le \norm{\Sigma_S^{1/2}\pi_S u}_2\norm{\pi_S G}_2           \\
			                   & = \norm{u}_{\Sigma_{S,k}}\norm{\pi_S G}_2                    \\
			                   & \le \norm{u}_{\Sigma_S}\norm{\pi_S G}_2\,.
		\end{align*}
		By the Gaussian concentration inequality, with probability at least $1-e^{-r}$, we have
		\begin{align*}
			\norm{\pi_S G}_2 & \le \sqrt{2(r+k)}\,,
		\end{align*}
		thereby, on the same event, for all $u\in \mbb R^{d-m+1}$ we have
		\begin{align}
			\inp{u, \pi_S X_S} & \le \norm{u}_{\Sigma_S} \sqrt{2(r+k)}\label{eq:CI-top-projection}\,.
		\end{align}
		Furthermore, recalling the definition of  $\sigma = \sigma(S,k)$, with probability at least $1-e^{-r}$ we have
		\begin{align*}
			\inp{u, \pi^\perp_S X_S} & \le \norm{u}_1 \norm{\pi_S^\perp X_{S}}_\infty                                              \\
			                         & \le \norm{u}_1 (\E( \norm{\pi_S^\perp X_{S}}_\infty) + \sqrt{2r}\norm{\sigma(S,k)}_\infty ) \\
			                         & \le \norm{u}_1 (C \sigma^*(S,k) + \sqrt{2r}\norm{\sigma(S,k)}_\infty)\,,
		\end{align*}
		where the second line follows from the Gaussian concentration inequality, and the third line follows from \cite[Proposition 2.4.16 and the remarks after Theorem 2.4.18]{Tal14} for some absolute constant $C>0$. Adding the derived inequalities, with probability at least $1-2e^{-r}$, for all $u\in \mbb R^{d-m+1}$, we can guarantee
		\begin{align}
			\inp{u,X_S} & = \inp{u, \pi^\perp_S X_S}+\inp{u, \pi_S X_S} \nonumber                                                                                        \\
			            & \le \norm{u}_1 (C \sigma^*(S,k) + \sqrt{2r}\norm{\sigma(S,k)}_\infty) + \norm{u}_{\Sigma_{S}}\sqrt{2(r+k)}\,. \label{eq:CI-instance-dependent}
		\end{align}
		Applying this bound in \eqref{eq:theta-variational-form}, for any fixed $S\in \binom{[d]}{d-m+1}$, with probability at least $1-2e^{-r}$, we have
		\begin{align*}
			\max_{u\in\triangle^{d-m+1}}\inp{u,\theta_S} & \ge \max_{u\in \triangle^{d-m+1}} \inp{u,\hat{\theta}_S} - (C \sigma^*(S,k) + \sqrt{2r}\norm{\sigma(S,k)}_\infty) - \norm{u}_{\Sigma_{S}}\sqrt{2(r+k)}\,.
		\end{align*}
		To obtain a lower bound for $\theta^\downarrow_m$, we can choose $S$ to be the indices of the $d-m+1$ smallest entries of $\theta$. But to be truly agnostic to the choice of $\theta$, we need to invoke the union bound and minimize the lower bound with respect to $S\in\binom{[d]}{d-m+1}$, at the cost of increasing $r$ by $m + m\log(d/m) > \log \tbinom{d}{m-1}$. The resulting inequality is then
		\begin{align*}
			\theta^\downarrow_m & \ge \min_{S\in \binom{[d]}{d-m+1}}\max_{u\in \triangle^{d-m+1}} \Big(\inp{u,\hat{\theta}_S} - (C \sigma^*(S,k) + \sqrt{2(r+m + m\log(d/m))}\norm{\sigma(S,k)}_\infty) \\
			                    & \hphantom{\min_{S\in \binom{[d]}{d-m+1}}\max_{u\in \triangle^d}} - \norm{u}_{\Sigma_{S}}\sqrt{2(r+m + m\log(d/m)+k)}\Big)\,,
		\end{align*}
		which, by identifying the expressions of $\beta_{r,m,k}$ and $Q_{S,\beta_{r,m,k}}(\cdot)$, is equivalent to \eqref{eq:CI-lower}. To establish the upper bound \eqref{eq:CI-upper}, observe that $\theta^\downarrow_m = - (-\theta)^\downarrow_{d-m+1}$, which allows us to reuse the inequalities above to derive an upper bounds for $\theta^\downarrow_m$ through the lower bound for $(-\theta)^\downarrow_{d-m+1}$.
	\end{proof}
	It is worth mentioning that for $k=0$, the left-hand side of \eqref{eq:CI-top-projection} vanishes, thereby we can improve the inequality \eqref{eq:CI-instance-dependent} to
	\begin{align*}
		\inp{u,X_S} & \le \norm{u}_1 (C \sigma^*(S,0) + \sqrt{2r}\norm{\sigma(S,0)}_\infty)\,.
	\end{align*}
	Consequently, for $k=0$ the corresponding bounds are in fact
	\begin{align*}
		\theta^\downarrow_m & \ge \min_{S\in \binom{[d]}{d-m+1}}\left(\max_{i\in S}\hat{\theta}_i - C\sigma^*(S,0) - \sqrt{2(r+m + m\log(d/m))}\norm{\sigma(S,0)}_\infty\right)\,,
	\end{align*}
	and
	\begin{align*}
		\theta^\downarrow_m & \le \max_{S\in \binom{[d]}{m}}\left(\min_{i\in S}\hat{\theta}_i + C\sigma^*(S,0) + \sqrt{2(r+m + m\log(d/m))}\norm{\sigma(S,0)}_\infty\right)\,.
	\end{align*}
	\begin{proof}[Proof of \ref{prop:lower-bound}]
		Le Cam's two point method \cite[Theorem 31.1]{PW24} (see also \cite[Lemma 1]{Yu97}) guarantees that
		\begin{align*}
			\sup_{\theta \in \Theta} \E (g(\hat{\theta})-\theta^\downarrow_m)^2 & \ge \sup_{\theta,\eta\in \Theta}\frac{1}{4}(\theta^\downarrow_m-\eta^\downarrow_m)^2\left(1-D_{\mr{TV}}(\P_\theta,\P_{\eta})\right)\,,
		\end{align*}
		where $D_{\mr{TV}}(\cdot,\cdot)$ denotes the total variation distance, and $\P_\theta = \mr{Normal}(\theta,\Sigma)$ and $\P_{\eta} = \mr{Normal}(\eta,\Sigma)$. The ``simplified'' Bretagnolle--Huber inequality \cite[Equation 2.25]{Tsy08} (see also \citep{Can23} for a broader context) guarantees that
		\begin{align}
			D_{\mr{TV}}(\P_\theta,\P_{\eta}) & \le 1- \frac{1}{2}e^{-(\theta-\eta)^\T\Sigma^{-1}(\theta-\eta)/2}\,,\label{eq:sBH}
		\end{align}
		using which we obtain
		\begin{align*}
			\sup_{\theta \in \Theta} \E (g(\hat{\theta})-\theta^\downarrow_m)^2 & \ge \sup_{\theta,\eta\in \Theta}\frac{1}{8}(\theta^\downarrow_m-\eta^\downarrow_m)^2 e^{-(\theta-\eta)^\T\Sigma^{-1}(\theta-\eta)/2}                   \\
			                                                                    & = \sup_{\theta,\eta\in \Theta}\sup_{b\in [0,1]}\frac{1}{8}(\theta^\downarrow_m-\eta^\downarrow_m)^2 b e^{-b(\theta-\eta)^\T\Sigma^{-1}(\theta-\eta)/2} \\
			                                                                    & \ge \frac{1}{8e\max\{1,2\kappa^2\}}\sup_{\theta,\eta\in \Theta}(\theta^\downarrow_m-\eta^\downarrow_m)^2\,,
		\end{align*}
		where the second line follows from the fact that $\Theta$ is star-shaped, and the third line follows from the inequality $\max_{b\in[0,1]} be^{-bz} \ge e^{-1}/\max\{1,z\}$ for $z\ge 0$.
		We can derive \eqref{eq:expected-LB} from this lower bound using the fact that
		\begin{align*}
			\sup_{\theta,\eta\in \Theta}(\theta^\downarrow_m-\eta^\downarrow_m)^2 & = \left(\sup_{\theta\in\Theta}\theta^\downarrow_m - \inf_{\eta\in \Theta}\eta^\downarrow_m\right)^2          \\
			                                                                      & = \left(\sup_{\theta\in\Theta}\theta^\downarrow_m + \sup_{\eta\in \Theta}\eta^\downarrow_{d-m+1}\right)^2\,,
		\end{align*}
		where the latter equation follows from the symmetry of the set $\Theta$, and the fact that $-\eta^\downarrow_m = (-\eta)^\downarrow_{d-m+1}$.

		Furthermore, it follows from the definition of the total variation distance and \eqref{eq:sBH} that for any $c \ge 0$ we have
		\begin{align*}
			|\P(|g(\hat{\theta}) - \theta^\downarrow_m| > c) - \P(|g(\hat{\eta}) - \theta^\downarrow_m| > c)| & \le  1- \frac{1}{2}e^{-(\theta-\eta)^\T\Sigma^{-1}(\theta-\eta)/2}\,.
		\end{align*}
		In particular, for any $c \le |\theta^\downarrow_m - \eta^\downarrow_m|/2$, together with the inequality
		\begin{align*}
			\P(|g(\hat{\eta}) - \theta^\downarrow_m| > \frac{1}{2}|\theta^\downarrow_m - \eta^\downarrow_m|) & \ge \P(|g(\hat{\eta}) - \eta^\downarrow_m| < \frac{1}{2}|\theta^\downarrow_m - \eta^\downarrow_m|)\,,
		\end{align*}
		which follows from the triangle inequality, we obtain
		\begin{align*}
			\P(|g(\hat{\theta}) - \theta^\downarrow_m| \ge c) + \P(|g(\hat{\eta}) - \eta^\downarrow_m| \ge c) & \ge   \frac{1}{2}e^{-(\theta-\eta)^\T\Sigma^{-1}(\theta-\eta)/2}\,.
		\end{align*}
		Therefore, if there exists a pair $\theta,\eta\in \Theta$ such that $(\theta-\eta)^\T\Sigma^{-1}(\theta-\eta) \le 2$ and $|\theta^\downarrow_m - \eta^\downarrow_m|/2 \ge c$, then
		\begin{align*}
			\sup_{\theta\in \Theta}\P(|g(\hat{\theta}) - \theta^\downarrow_m| \ge c) & \ge \frac{1}{2e}\,.
		\end{align*}
		The desired result follows by setting $c = \sup_{\theta,\eta\in \Theta} |\theta^\downarrow_m-\eta^\downarrow_m|/(3\max\{1,\sqrt{2}\kappa\})$ which meets the required conditions.
	\end{proof}

	\begin{lem}\label{lem:Bernstein}
		Let $Y\in[-1,1]$ be a zero-mean random variable. Then, we have
		\begin{align*}
			\inf_{\lambda \ge 0} \frac{r+\log \E e^{\lambda Y}}{\lambda} & \le \frac{1}{3}r + \sqrt{2\E( Y^2) r}\,.
		\end{align*}
	\end{lem}
	\begin{proof}
		For all $\lambda \in [-3, 3]$, we have
		\begin{align*}
			\E e^{\lambda Y} & = 1 + \sum_{m=2}^\infty \frac{\E Y^m}{m!}\lambda^m                                          \\
			                 & \le 1 + \sum_{m=2}^\infty \frac{\E Y^2}{m!}|\lambda|^m                                      \\
			                 & \le 1 +   \frac{\E Y^2}{2}\sum_{m=2}^\infty \lambda^2\left(\frac{|\lambda|}{3}\right)^{m-2} \\
			                 & \le 1 + \frac{\E Y^2}{2}\cdot\frac{\lambda^2}{1-|\lambda|/3}  \,.
		\end{align*}
		Since $\log(1+u)\le u$ for all $u>-1$, it follows that
		\begin{align*}
			\inf_{\lambda \ge 0} \frac{r+\log \E e^{\lambda Y}}{\lambda} & \le \inf_{\lambda\in [0,3]}\frac{r+\E(Y^2)\lambda^2/(2-2\lambda/3)}{\lambda}\nonumber \\
			                                                             & \le \frac{1}{3}r + \sqrt{2\E(Y^2)r}\,,
		\end{align*}
		where the second line follows by evaluating the argument of the infimum at $\lambda = 3\sqrt{r}/(\sqrt{r} + 3\sqrt{\E(Y^2)/2})$.
	\end{proof}

	\section{Bounding $T_r(f)$ in Orlicz Spaces}\label{apx-sec:Orlicz}
	The purpose of this subsection is to approximate $T_r(f)$ for $f\in \mbb V$, in situations where $\mbb V$ is an \emph{Orlicz space} of \emph{exponential type}. Orlicz spaces are one of the important function spaces studied in functional analysis and probability theory. These function spaces can be described by their corresponding \emph{Orlicz norm}s. For a convex increasing function $\psi\st{} [0, \infty) \to [0, \infty)$ with $\psi(0)=0$ the $\psi$-Orlicz norm of a random variable $Y$ is defined as
	\begin{align*}
		\norm{Y}_\psi & \defeq \inf\left\{u > 0 \st \E \psi\left(\frac{|Y|}{u}\right)\le 1\right\}\,.
	\end{align*}
	Special cases are the usual $p$-norms for $p\ge 1$, the sub-Gaussian norm, and the sub-exponential norm, respectively, corresponding to $\psi(t) = t^p$, $\psi(t)=e^{t^2}-1$, and $\psi(t) = e^t-1$. Other interesting cases are the Bernstein{\textendash}Orlicz norm corresponding to
	\begin{align*}
		\psi(t) & = e^{{(\sqrt{1+2Lt}-1)}^2/L^2}-1\,,
	\end{align*}
	for some parameter $L>0$, introduced by \citet{vdGL12}, as well as the Bennett{\textendash}Orlicz norm corresponding to
	\begin{align*}
		\psi(t) & =e^{2\left((1+Lt)\log(1+Lt)-Lt\right)/L^2}-1\,,
	\end{align*}
	for some parameter $L>0$, introduced by \citet{Wel17}.

	To express the general bounds presented in \ref{thm:main-exponential} when the underlying metric of interest imposed on $\mc F$ is induced by an Orlicz $\psi$-norm, it suffices to bound $T_r(f)$ in terms of $\norm{f}_\psi\defeq \norm{f(X)}_\psi$. The following simple lemma can provide such bounds.

	\begin{lem}\label{lem:T-r-Orlicz}		
		For every $f\in \mbb V$ we have
		\begin{align}
			T_r(f) & \le\inf_{\lambda \ge 0} \frac{r+\log\left(1+\int_0^\infty 2\lambda \left(e^{\lambda t}-1\right)/\left(\psi(t)+1\right) \d t\right)}{\lambda}\,\norm{f}_{\psi}\,.\label{eq:Orlicz-coeffs}
		\end{align}
	\end{lem}
	\begin{proof}
		Without loss of generality we may assume $f\ne 0$. The inequality follows by bounding the moment generating function of the zero-mean random variable $Y = f(X)/\norm{f(X)}_\psi$, which has a unit $\psi$-Orlicz norm, as
		\begin{align*}
			\E e^{\lambda Y} & = \E\left( e^{\lambda Y} - \lambda Y\right)                                                                    \\
			                 & \le \E \left(e^{\lambda |Y|} -\lambda |Y| \right)                                                              \\
			                 & = 1+\int_0^\infty \lambda \P\left(|Y|>t\right)\left(e^{\lambda t}-1\right) \d t                                \\
			                 & \le 1+\int_0^\infty \lambda\left(\E \psi(|Y|)+1\right)\left(e^{\lambda t}-1\right)/\left(\psi(t)+1\right) \d t \\
			                 & = 1+\int_0^\infty 2\lambda \left(e^{\lambda t}-1\right)/\left(\psi(t)+1\right) \d t\,.\qedhere
		\end{align*}
	\end{proof}

	For exponential type Orlicz norms, defined below, we have the following proposition that provides a more explicit approximation for $T_r(f)$ in terms of $\norm{f}_\psi$.

	\begin{prop}\label{prop:exponential-type}
		Let $\norm{\cdot}_\psi$ be an Orlicz norm of exponential type, meaning that \begin{align*}
			\psi(t) & = e^{\phi(t)}-1\,
		\end{align*}
		for a convex and increasing function $\phi \st{} [0,\infty)\to [0,\infty)$ with $\phi(0)=0$. Furthermore, let $\phi^*(\cdot)$ denote the convex conjugate of $\phi(\cdot)$, i.e.,
		\begin{align*}
			\phi^*(\lambda) & = \sup_{t\ge 0} \left(\lambda t - \phi(t)\right)\,.
		\end{align*}
		If for some $M>0$ we have
		\begin{align}
			\inf_{\lambda \ge 0}\ \frac{e^{\phi^*(\lambda)}-1}{\lambda^2} & \ge M\int_0^\infty t e^{-\phi(t)/2}\d t\,,\label{eq:conversion-factor}
		\end{align}
		then for every $f\in \mbb V$ we have
		\begin{align}
			T_r(f) & \le\max\{3,3/\sqrt{2M}\}\phi^{-1}(2r/3)\,\norm{f}_\psi\,.\label{eq:Orlicz-coeffs-exp-type}
		\end{align}
	\end{prop}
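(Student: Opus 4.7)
The plan is to apply Lemma~\ref{lem:w_r}, which reduces the task to bounding
\begin{align*}
	\inf_{\lambda \ge 0}\frac{r + \log(1 + 2\lambda J(\lambda))}{\lambda},\qquad J(\lambda) \defeq \int_0^\infty (e^{\lambda t}-1)e^{-\phi(t)}\,\d t,
\end{align*}
and then to control this infimum via Fenchel--Young estimates together with the assumption~\eqref{eq:conversion-factor}.

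The first step is to bound $J(\lambda)$. Combining the elementary inequality $e^{\lambda t}-1 \le \lambda t\, e^{\lambda t}$ with the Young-type bound $\lambda t - \phi(t)/2 \le \phi^*(2\lambda)/2$ (a consequence of the identity $(\phi/2)^*(\lambda) = \phi^*(2\lambda)/2$) yields $J(\lambda) \le \lambda I\,e^{\phi^*(2\lambda)/2}$, where $I \defeq \int_0^\infty t\,e^{-\phi(t)/2}\,\d t$. Invoking \eqref{eq:conversion-factor} with $\mu = 2\lambda$ then produces $2\lambda^2 I \le (e^{\phi^*(2\lambda)}-1)/(2M)$, so that
\begin{align*}
	2\lambda J(\lambda) \le \frac{e^{\phi^*(2\lambda)/2}(e^{\phi^*(2\lambda)}-1)}{2M}\le \frac{e^{3\phi^*(2\lambda)/2}}{2M}.
\end{align*}

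The second step is to linearize $\log(1+\cdot)$ and minimize over $\lambda$ in two complementary regimes. In the first (``exponential'') regime, using $\log(1+y) \le \log 2 + (\log y)_+$ gives $\log(1+2\lambda J(\lambda)) \le (3/2)\phi^*(2\lambda) - \log M$. The convex conjugate of $\lambda \mapsto (3/2)\phi^*(2\lambda)$ is $w \mapsto (3/2)\phi(w/3)$ (by substituting $\mu = 2\lambda$ in the definition), and so the Fenchel identity
\begin{align*}
	\inf_{\lambda \ge 0}\frac{r + (3/2)\phi^*(2\lambda)}{\lambda} = 3\phi^{-1}(2r/3)
\end{align*}
delivers the prefactor $3$ once the $-\log M$ term is absorbed (legitimate as soon as $M \ge 1$). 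In the second (``quadratic'') regime, the alternative bound $\log(1+y) \le y$ combined with the elementary estimate $e^{3\phi^*(2\lambda)/2}-e^{\phi^*(2\lambda)/2} \le \phi^*(2\lambda)\,e^{3\phi^*(2\lambda)/2}$ reduces $\log(1+2\lambda J(\lambda))$ to an expression of order $\phi^*(2\lambda)/(2M)$ once the exponential factor is controlled by restricting to moderate $\lambda$. A parallel Fenchel minimization then yields a $\sqrt{r/M}$-type bound, which after rearrangement using the subhomogeneity $\phi^{-1}(\alpha s) \ge \alpha\phi^{-1}(s)$ for $\alpha \in [0,1]$ takes the form $(3/\sqrt{2M})\phi^{-1}(2r/3)$.

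The main obstacle is splicing these two regimes into the single $\max\{3, 3/\sqrt{2M}\}$ prefactor of the statement. The logarithmic bound delivers the constant $3$ cleanly when $M \ge 1$ but picks up an additive $-\log M$ that cannot be absorbed when $M$ is small; the quadratic bound is necessary for the $\sqrt{1/M}$ scaling that dominates as $M \to 0$. The two prefactors agree at $M = 1/2$, which is the natural transition point reflected in the statement.
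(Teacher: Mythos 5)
Your opening steps are exactly right and match the paper: start from \ref{lem:w_r}, bound $(e^{\lambda t}-1)e^{-\phi(t)}\le \lambda t\,e^{\lambda t-\phi(t)}$, and apply the Fenchel--Young estimate $\lambda t\le \phi(t)/2+\phi^*(2\lambda)/2$ to get $J(\lambda)\le \lambda I\,e^{\phi^*(2\lambda)/2}$ with $I=\int_0^\infty t\,e^{-\phi(t)/2}\d t$. The proof diverges, and develops a genuine gap, at the point where you invoke \eqref{eq:conversion-factor}.

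You apply the assumption at $\mu=2\lambda$, producing a multiplicative factor $1/(2M)$ sitting \emph{outside} the exponential, namely $2\lambda J(\lambda)\le e^{3\phi^*(2\lambda)/2}/(2M)$. That factor is the source of the trouble you then identify yourself: after taking $\log(1+\cdot)$ you get either an additive $-\log M$ that cannot be absorbed into the Fenchel minimization when $M<1$, or (via $\log(1+y)\le y$) a product $\phi^*(2\lambda)e^{3\phi^*(2\lambda)/2}/(2M)$ that requires an uncontrolled restriction to ``moderate $\lambda$.'' You acknowledge that the splicing of these two regimes into a single $\max\{3,3/\sqrt{2M}\}$ prefactor is ``the main obstacle,'' so the argument is incomplete as written. (Incidentally, the two prefactors agree at $M=1/2$, not $M=1$, since $3=3/\sqrt{2M}$ when $M=1/2$; this is a sign the two-regime heuristic does not line up with the claimed bound.)

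The missing idea is to apply \eqref{eq:conversion-factor} not at $\mu=2\lambda$ but at the $M$-dependent scale $\mu=\sqrt{2/M}\,\lambda$. This gives $2\lambda^2 I\le e^{\phi^*(\sqrt{2/M}\,\lambda)}-1$, so the $M$-dependence is pushed \emph{inside} the argument of $\phi^*$ as a rescaling of $\lambda$, with no stray $1/M$ or $\log M$. One then has, using $1+ab\le a(1+b)$ for $a=e^{\phi^*(2\lambda)/2}\ge 1$ and $b=e^{\phi^*(\sqrt{2/M}\lambda)}-1\ge 0$,
\begin{align*}
\log\!\left(1+2\lambda^2 I\,e^{\phi^*(2\lambda)/2}\right)\ \le\ \tfrac{1}{2}\phi^*(2\lambda)+\phi^*\!\left(\sqrt{2/M}\,\lambda\right)\ \le\ \tfrac{3}{2}\phi^*\!\left(\max\{2,\sqrt{2/M}\}\,\lambda\right),
\end{align*}
where the last inequality uses only monotonicity of $\phi^*$. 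A single Fenchel minimization over $\lambda$ (after substituting $\mu=\max\{2,\sqrt{2/M}\}\lambda$, and using $\inf_{\mu\ge 0}(s+\phi^*(\mu))/\mu=\phi^{-1}(s)$) then yields the stated bound $w_r\le \max\{3,3/\sqrt{2M}\}\phi^{-1}(2r/3)$ in one stroke, with no case-splitting on $M$ and no constants to absorb. That rescaling of the point at which \eqref{eq:conversion-factor} is evaluated is precisely what your proposal lacks.
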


	\begin{proof}
		Since $\phi^*(\cdot)$ is the convex conjugate of $\phi(\cdot)$, for every $\lambda, t\ge 0$ we can write
		\begin{align*}
			\lambda t \le \frac{1}{2}\phi(t) + \frac{1}{2}\phi^*(2\lambda)\,.
		\end{align*}
		Applying this bound to \eqref{eq:Orlicz-coeffs} of \ref{lem:T-r-Orlicz} we have
		\begin{align*}
			T_r(f) & \le\inf_{\lambda \ge 0} \frac{r+\log\left(1+\int_0^\infty 2\lambda \left(1-e^{-\lambda t}\right)e^{\lambda t - \phi(t)} \d t\right)}{\lambda}\,\norm{f}_\psi \\
			       & \le \inf_{\lambda \ge 0} \frac{r+\log\left(1+2\lambda^2 e^{\phi^*(2\lambda)/2}\int_0^\infty t e^{- \phi(t)/2} \d t\right)}{\lambda}\,\norm{f}_\psi\,,
		\end{align*}
		where we also used the inequality $1 - e^{-\lambda t}\le \lambda t$. It follows from \eqref{eq:conversion-factor} that
		\begin{align*}
			2\lambda^2 \int_0^\infty t e^{-\phi(t)/2}\d t & \le e^{\phi^*(\sqrt{2/M}\lambda)}-1\,.
		\end{align*}
		Then, using the fact that $\phi^*(\cdot)$ is nonnegative, we have
		\begin{align*}
			T_r(f) & \le\inf_{\lambda \ge 0} \frac{r+\phi^*(2\lambda)/2 + \phi^*(\sqrt{2/M}\lambda)}{\lambda}\,\norm{f}_\psi\,.
		\end{align*}
		Furthermore, because $\phi^*(\cdot)$ is increasing, we can write
		\begin{align*}
			\phi^*(2\lambda)/2+\phi^*(\sqrt{2/M}\lambda) & \le \frac{3}{2}\phi^*\left(\max\{2,\sqrt{2/M}\}\lambda\right)\,.
		\end{align*}
		Therefore, we conclude that
		\begin{align*}
			T_r(f) & \le\inf_{\lambda \ge 0} \frac{r+\frac{3}{2}\phi^*\left(\max\{2,\sqrt{2/M}\}\lambda\right)}{\lambda}\,\norm{f}_\psi \\
			       & = \max\{3,3/\sqrt{2M}\}\phi^{-1}(2r/3)\,\norm{f}_\psi\,.\qedhere
		\end{align*}

	\end{proof}

	It is worth mentioning that the constants appearing in the proposition are not necessarily optimal. In fact, the result may be improved for example by using the bound $1-e^{-\lambda t}\le \min\{\lambda t,1\}$ instead of the inequality $1-e^{-\lambda t}\le \lambda t$ that is used in the current proof. We did not pursue these refinements intending to obtain relatively simpler expressions.

	Let us quantify the result of \ref{prop:exponential-type} when $\norm{\cdot}_\psi$ is the sub-Gaussian Orlicz norm, and when it is the Bernstein--Orlicz norm. In the sub-Gaussian case, we have $\phi(t)=t^2$ and $\phi^*(\lambda) =  \bbone(\lambda \ge 0)\lambda^2/4$. It is easy to verify that \eqref{eq:conversion-factor} holds with $M = 1/4$. Therefore, for the sub-Gaussian Orlicz norm, \eqref{eq:Orlicz-coeffs-exp-type} reduces to
	\begin{align*}
		T_r(f) & \le \sqrt{12r}\,\norm{f}_{\psi}\,.
	\end{align*}

	In the case of Bernstein--Orlicz norm, $\phi(t) = (\sqrt{1+2Lt}-1)^2/L^2$. By the change of variable $t = \left((Lu+1)^2-1\right)/(2L)$ and using standard Gaussian integral formulas we can calculate the integral on the right-hand side of \eqref{eq:conversion-factor} as
	\begin{align*}
		\int_0^\infty t e^{-\left(\sqrt{1+2Lt}-1\right)^2/(2L^2)}\d t & = \sqrt{\frac{\pi}{8}}L + 1\,.
	\end{align*}
	Furthermore, with some straightforward calculations we can show that the convex conjugate of $\phi(\cdot)$ is
	\begin{align*}
		\phi^*(\lambda) & = \begin{cases}
			                    0\,,                                            & \lambda<0\,,           \\
			                    \frac{\lambda^2}{4\left(1-L\lambda/2\right)}\,, & \lambda \in [0,2/L)\,, \\
			                    \infty\,,                                       & \lambda > 2/L\,.
		                    \end{cases}
	\end{align*}
	Therefore, for $\lambda \ge 0$, we have
	\begin{align*}
		e^{\phi^*(\lambda)}-1 & \ge \phi^*(\lambda)  \ge \frac{\lambda^2}{4}\,.
	\end{align*}
	Consequently, \eqref{eq:conversion-factor} holds if
	\begin{align*}
		M & = \frac{1}{\sqrt{2\pi}L+4}\,,
	\end{align*}
	for which \eqref{eq:Orlicz-coeffs-exp-type} reduces to
	\begin{align*}
		T_r(f) & \le 3(\sqrt{\pi/2}L+2)^{1/2}\phi^{-1}(2r/3)\,\norm{f}_\psi             \\
		       & = (\sqrt{\pi/2}L+2)^{1/2}\left(Lr + \sqrt{6r}\right)\,\norm{f}_\psi\,.
	\end{align*}
\end{appendix}

\printbibliography

@Article{Pin14,
  author  = {Pinelis, Iosif},
  journal = {Risks},
  title   = {An Optimal Three-Way Stable and Monotonic Spectrum of Bounds on Quantiles: {A} Spectrum of Coherent Measures of Financial Risk and Economic Inequality},
  year    = {2014},
  issn    = {2227-9091},
  number  = {3},
  pages   = {349--392},
  volume  = {2},
  doi     = {10.3390/risks2030349},
  groups  = {Probability & Statistics},
  url     = {https://www.mdpi.com/2227-9091/2/3/349},
}

@Book{Ver18,
  author     = {Vershynin, Roman},
  publisher  = {Cambridge University Press},
  title      = {High-Dimensional Probability: {A}n Introduction with Applications in Data Science},
  year       = {2018},
  series     = {Cambridge Series in Statistical and Probabilistic Mathematics},
  collection = {Cambridge Series in Statistical and Probabilistic Mathematics},
  file       = {:Books/Ver18 - High Dimensional Probability_ an Introduction with Applications in Data Science.pdf:PDF},
  groups     = {Probability & Statistics, High-dimensional Probability},
  place      = {Cambridge},
}

@Article{Lat97,
  author    = {Rafa\l{} Lata\l{}a},
  journal   = {The Annals of Probability},
  title     = {Estimation of moments of sums of independent real random variables},
  year      = {1997},
  number    = {3},
  pages     = {1502--1513},
  volume    = {25},
  doi       = {10.1214/aop/1024404522},
  groups    = {Probability & Statistics},
  keywords  = {Estimation of moments, Rosenthal inequality, Sums of independent random variables},
  ourl      = {https://doi.org/10.1214/aop/1024404522},
  publisher = {Institute of Mathematical Statistics},
}

@InCollection{KP00,
  author    = {Vladimir Koltchinskii and Dmitriy Panchenko},
  booktitle = {High Dimensional Probability {II}},
  publisher = {Birkhäuser Boston},
  title     = {Rademacher Processes and Bounding the Risk of Function Learning},
  year      = {2000},
  pages     = {443--457},
  doi       = {10.1007/978-1-4612-1358-1_29},
  groups    = {Stochastic Processes, Statistical Learning Theory},
}

@Article{vdGL12,
  author    = {van de Geer, Sara and Lederer, Johannes},
  journal   = {Probability Theory and Related Fields},
  title     = {The {B}ernstein{\textendash}{O}rlicz norm and deviation inequalities},
  year      = {2012},
  month     = oct,
  number    = {1-2},
  pages     = {225--250},
  volume    = {157},
  doi       = {10.1007/s00440-012-0455-y},
  groups    = {Probability & Statistics},
  publisher = {Springer Science and Business Media {LLC}},
}

@Article{GK06,
  author    = {Evarist Gin{\'{e}} and Vladimir Koltchinskii},
  journal   = {The Annals of Probability},
  title     = {Concentration inequalities and asymptotic results for ratio type empirical processes},
  year      = {2006},
  month     = may,
  number    = {3},
  pages     = {1143--1216},
  volume    = {34},
  doi       = {10.1214/009117906000000070},
  groups    = {Stochastic Processes},
  publisher = {Institute of Mathematical Statistics},
}

@Book{Vap98,
  author    = {Vapnik, Vladimir},
  publisher = {Wiley},
  title     = {Statistical learning theory},
  year      = {1998},
  address   = {New York},
  file      = {:Books/Vap98 - Statistical Learning Theory.pdf:PDF},
  groups    = {Probability & Statistics},
}

@Article{Tal01,
  author    = {Michel Talagrand},
  journal   = {The Annals of Probability},
  title     = {Majorizing measures without measures},
  year      = {2001},
  month     = feb,
  number    = {1},
  volume    = {29},
  doi       = {10.1214/aop/1008956336},
  groups    = {Stochastic Processes},
  publisher = {Institute of Mathematical Statistics},
}

@Article{LT15,
  author    = {Lata\l{}a, Rafa\l{} and Tkocz, Tomasz},
  journal   = {Electronic Journal of Probability},
  title     = {A note on suprema of canonical processes based on random variables with regular moments},
  year      = {2015},
  issn      = {1083-6489},
  month     = jan,
  number    = {none},
  volume    = {20},
  doi       = {10.1214/EJP.v20-3625},
  groups    = {Stochastic Processes},
  publisher = {Institute of Mathematical Statistics},
}

@Article{BBM05,
  author    = {Bartlett, Peter L. and Bousquet, Olivier and Mendelson, Shahar},
  journal   = {The Annals of Statistics},
  title     = {Local {R}ademacher complexities},
  year      = {2005},
  issn      = {0090-5364},
  month     = aug,
  number    = {4},
  volume    = {33},
  doi       = {10.1214/009053605000000282},
  groups    = {Statistical Learning Theory},
  publisher = {Institute of Mathematical Statistics},
}

@Book{vdVW12,
  author    = {van der Vaart, Aad and Wellner, Jon},
  publisher = {Springer New York},
  title     = {Weak Convergence and Empirical Processes},
  year      = {2012},
  file      = {:Books/vdVW12 - Weak Convergence and Empirical Processes.pdf:PDF},
  groups    = {Probability & Statistics, Stochastic Processes},
}

@Book{DZ10,
  author    = {Amir Dembo and Ofer Zeitouni},
  publisher = {Springer Berlin Heidelberg},
  title     = {Large Deviations Techniques and Applications},
  year      = {2010},
  doi       = {10.1007/978-3-642-03311-7},
  file      = {:Books/DZ10 - Large Deviations Techniques and Applications.pdf:PDF},
  groups    = {Probability & Statistics},
}

@Article{Mar21,
  author    = {Marchina, Antoine},
  journal   = {Annales Henri Lebesgue},
  title     = {Concentration inequalities for suprema of unbounded empirical processes},
  year      = {2021},
  issn      = {2644-9463},
  month     = aug,
  pages     = {831--861},
  volume    = {4},
  doi       = {10.5802/ahl.90},
  groups    = {Probability & Statistics},
  publisher = {Cellule MathDoc/Centre Mersenne},
}

@Book{Tsy08,
  author    = {Tsybakov, Alexandre B.},
  publisher = {Springer-Verlag GmbH},
  title     = {Introduction to Nonparametric Estimation},
  year      = {2008},
  date      = {2008-10-22},
  ean       = {9780387790527},
  file      = {:Books/Tsy08 - Introduction to Nonparametric Estimation.pdf:PDF},
  groups    = {Probability & Statistics},
  url       = {https://www.ebook.de/de/product/12470796/alexandre_b_tsybakov_introduction_to_nonparametric_estimation.html},
}

@Article{Fer75,
  author    = {X. Fernique},
  title     = {Regularite des trajectoires des fonctions aleatoires {G}aussiennes},
  year      = {1975},
  pages     = {1--96},
  booktitle = {Ecole d'Et{\'{e}} de Probabilit{\'{e}}s de Saint-Flour {IV}--1974},
  doi       = {10.1007/bfb0080190},
  groups    = {Stochastic Processes},
  publisher = {Springer Berlin Heidelberg},
}

@Article{Wel17,
  author    = {Jon A. Wellner},
  journal   = {Sankhya A},
  title     = {The {B}ennett{\textendash}{O}rlicz Norm},
  year      = {2017},
  month     = may,
  number    = {2},
  pages     = {355--383},
  volume    = {79},
  doi       = {10.1007/s13171-017-0108-4},
  groups    = {Probability & Statistics},
  publisher = {Springer Science and Business Media {LLC}},
}

@Book{BLM13,
  author    = {St\'{e}phane Boucheron and G\'{a}bor Lugosi and Pascal Massart},
  publisher = {Oxford University Press},
  title     = {Concentration inequalities: {A} nonasymptotic theory of independence},
  year      = {2013},
  address   = {Oxford},
  isbn      = {978-0199535255},
  month     = feb,
  file      = {:Books/BLM13 - Concentration Inequalities _ a Nonasymptotic Theory of Independence.pdf:PDF},
  groups    = {Probability & Statistics, High-dimensional Probability},
  odoi      = {10.1093/acprof:oso/9780199535255.001.0001},
}

@Article{MP12,
  author  = {Shahar Mendelson and Grigoris Paouris},
  journal = {Journal of Functional Analysis},
  title   = {On generic chaining and the smallest singular value of random matrices with heavy tails},
  year    = {2012},
  issn    = {0022-1236},
  number  = {9},
  pages   = {3775--3811},
  volume  = {262},
  doi     = {10.1016/j.jfa.2012.01.027},
  groups  = {Probability & Statistics},
  url     = {https://www.sciencedirect.com/science/article/pii/S002212361200050X},
}

@Article{LM23,
  author    = {G\'{a}bor Lugosi and Shahar Mendelson},
  journal   = {Journal of the European Mathematical Society},
  title     = {Multivariate mean estimation with direction-dependent accuracy},
  year      = {2024},
  issn      = {1435-9863},
  number    = {6},
  pages     = {2211--2247},
  volume    = {26},
  doi       = {10.4171/jems/1321},
  groups    = {Robust Statistics},
  oeprint   = {https://arxiv.org/abs/2010.11921},
  publisher = {European Mathematical Society - EMS - Publishing House GmbH},
}

@InProceedings{LRS15,
  author    = {Liang, Tengyuan and Rakhlin, Alexander and Sridharan, Karthik},
  booktitle = {Proceedings of The 28th Conference on Learning Theory},
  title     = {Learning with Square Loss: Localization through Offset Rademacher Complexity},
  year      = {2015},
  address   = {Paris, France},
  editor    = {Gr\"{u}nwald, Peter and Hazan, Elad and Kale, Satyen},
  month     = jul,
  pages     = {1260--1285},
  publisher = {PMLR},
  series    = {Proceedings of Machine Learning Research},
  volume    = {40},
  groups    = {Statistical Learning Theory},
  pdf       = {http://proceedings.mlr.press/v40/Liang15.pdf},
  url       = {https://proceedings.mlr.press/v40/Liang15.html},
}

@Book{Var84,
  author    = {S. R. S. Varadhan},
  publisher = {Society for Industrial and Applied Mathematics},
  title     = {Large Deviations and Applications},
  year      = {1984},
  month     = jan,
  doi       = {10.1137/1.9781611970241},
  groups    = {Probability & Statistics},
}

@Article{Rio17,
  author    = {Emmanuel Rio},
  journal   = {Electronic Communications in Probability},
  title     = {{About the constants in the Fuk-Nagaev inequalities}},
  year      = {2017},
  pages     = {1--12},
  volume    = {22},
  doi       = {10.1214/17-ECP57},
  groups    = {Probability & Statistics},
  publisher = {Institute of Mathematical Statistics and Bernoulli Society},
}

@Book{Hsu96,
  author    = {Hsu, Jason},
  publisher = {Chapman and Hall/CRC},
  title     = {Multiple Comparisons: Theory and Methods},
  year      = {1996},
  isbn      = {9780429170874},
  month     = feb,
  doi       = {10.1201/b15074},
  groups    = {Probability & Statistics},
}

@Book{PW24,
  author    = {Polyanskiy, Yury and Wu, Yihong},
  publisher = {Cambridge University Press},
  title     = {Information Theory: From Coding to Learning},
  year      = {2024},
  groups    = {Probability & Statistics},
  place     = {Cambridge},
}

@Misc{Can23,
  author        = {Cl\'{e}ment L. Canonne},
  note          = {arXiv preprint; \href{http://doi.org/10.48550/arxiv.2202.07198}{\texttt{arXiv:2202.07198}}},
  title         = {A short note on an inequality between {KL} and {TV}},
  year          = {2023},
  archiveprefix = {arXiv},
  doi           = {10.48550/arxiv.2202.07198},
  eprint        = {2202.07198},
  groups        = {Probability & Statistics},
  primaryclass  = {math.PR},
}

@Article{VC71,
  author    = {V. N. Vapnik and A. Ya. Chervonenkis},
  journal   = {Theory of Probability \& Its Applications},
  title     = {On the Uniform Convergence of Relative Frequencies of Events to Their Probabilities},
  year      = {1971},
  month     = jan,
  number    = {2},
  pages     = {264--280},
  volume    = {16},
  doi       = {10.1137/1116025},
  groups    = {Statistical Learning Theory, Stochastic Processes},
  publisher = {Society for Industrial {\&} Applied Mathematics ({SIAM})},
}

@Book{Tal14,
  author    = {Michel Talagrand},
  publisher = {Springer Berlin Heidelberg},
  title     = {Upper and Lower Bounds for Stochastic Processes},
  year      = {2014},
  file      = {:Books/Tal14 - Upper and Lower Bounds for Stochastic Processes.pdf:PDF},
  groups    = {Probability & Statistics, Stochastic Processes},
}

@InBook{Yu97,
  author    = {Yu, Bin},
  editor    = {Pollard, David and Torgersen, Erik and Yang, Grace L.},
  pages     = {423--435},
  publisher = {Springer New York},
  title     = {{Assouad}, {Fano}, and {Le Cam}},
  year      = {1997},
  address   = {New York, NY},
  isbn      = {978-1-4612-1880-7},
  booktitle = {Festschrift for {Lucien Le Cam}: Research Papers in Probability and Statistics},
  doi       = {10.1007/978-1-4612-1880-7_29},
  groups    = {Probability & Statistics},
}

@Book{Pol84,
  author    = {David Pollard},
  publisher = {Springer New York},
  title     = {Convergence of Stochastic Processes},
  year      = {1984},
  doi       = {10.1007/978-1-4612-5254-2},
  groups    = {Stochastic Processes},
}

@Article{Tal87,
  author    = {Michel Talagrand},
  journal   = {Acta Mathematica},
  title     = {Regularity of {G}aussian processes},
  year      = {1987},
  number    = {0},
  pages     = {99--149},
  volume    = {159},
  doi       = {10.1007/bf02392556},
  groups    = {Stochastic Processes},
  publisher = {International Press of Boston},
}

@Article{AB07,
  author     = {Audibert, Jean-Yves and Bousquet, Olivier},
  journal    = {Journal of Machine Learning Research},
  title      = {Combining {PAC}-{B}ayesian and Generic Chaining Bounds},
  year       = {2007},
  issn       = {1532-4435},
  pages      = {863--889},
  volume     = {8},
  file       = {:AB07 - Combining PAC Bayesian and Generic Chaining Bounds.PDF:PDF},
  groups     = {Statistical Learning Theory},
  issue_date = {5/1/2007},
  numpages   = {27},
}

@Article{Men16,
  author   = {Shahar Mendelson},
  journal  = {Stochastic Processes and their Applications},
  title    = {Upper bounds on product and multiplier empirical processes},
  year     = {2016},
  issn     = {0304-4149},
  note     = {In Memoriam: {E}varist {G}in\'{e}},
  number   = {12},
  pages    = {3652--3680},
  volume   = {126},
  doi      = {10.1016/j.spa.2016.04.019},
  groups   = {Stochastic Processes},
  keywords = {Empirical processes, Generic chaining},
  url      = {https://www.sciencedirect.com/science/article/pii/S0304414916300412},
}

@InCollection{GKW03,
  author    = {Evarist Gin{\'{e}} and Vladimir Koltchinskii and Jon A. Wellner},
  booktitle = {Stochastic Inequalities and Applications},
  publisher = {Birkh\"{a}user Basel},
  title     = {Ratio Limit Theorems for Empirical Processes},
  year      = {2003},
  pages     = {249--278},
  doi       = {10.1007/978-3-0348-8069-5_15},
  groups    = {Stochastic Processes},
}

@Article{Dir15,
  author    = {Dirksen, Sjoerd},
  journal   = {Electronic Journal of Probability},
  title     = {Tail bounds via generic chaining},
  year      = {2015},
  volume    = {20},
  file      = {:Dir15 - Tail Bounds Via Generic Chaining.pdf:PDF},
  groups    = {Stochastic Processes, Probability & Statistics, High-dimensional Probability},
  keywords  = {generic chaining},
  publisher = {The Institute of Mathematical Statistics and the Bernoulli Society},
}

@Article{vHan18a,
  author    = {van Handel, Ramon},
  journal   = {Journal of the European Mathematical Society},
  title     = {Chaining, interpolation, and convexity},
  year      = {2018},
  month     = jul,
  number    = {10},
  pages     = {2413--2435},
  volume    = {20},
  doi       = {10.4171/JEMS/815},
  groups    = {Stochastic Processes},
  publisher = {European Mathematical Society - {EMS} - Publishing House {GmbH}},
}

\end{document}